\renewcommand{\cases}[1]{\left\{ \begin{array}{rl} #1 \end{array} \right.}
\newcommand{\smfrac}[2]{{\textstyle \frac{#1}{#2}}}
\def\R{\mathbb{R}}
\def\Z{\mathbb{Z}}
\def\CC{{\rm C}}
\def\<{\langle}
\def\>{\rangle}
\def\eps{\varepsilon}
\def\argmin{{\rm argmin}}
\def\conv{{\rm conv}}
\def\h{{\rm qc}}
\def\cb{{\rm cb}}
\def\tot{{\rm tot}}
\def\As{\mathcal{A}}
\def\Cs{\mathcal{C}}
\def\Asper{\mathcal{A}_{\#}}
\def\Is{\mathcal{I}}
\def\Fs{\mathscr{F}}
\def\Ys{\mathscr{Y}}
\def\Us{\mathscr{U}}
\begin{document}

\title[A Priori and A Posteriori Analysis of the QNL-QC in 1D]{A
  Priori and A Posteriori Analysis of the Quasi-Nonlocal
  Quasicontinuum Method in 1D}

\author{C. Ortner}
\address{C. Ortner\\ Mathematical Institute\\
  24-29 St Giles' \\ Oxford OX1 3LB \\ UK}
\email{ortner@maths.ox.ac.uk}

\date{\today}

\thanks{This work was supported by the EPSRC Critical Mass Programme
  ``New Frontiers in the Mathematics of Solids''.}

\subjclass[2000]{65N12, 65N15, 70C20}

\keywords{quasicontinuum method, quasi-nonlocal coupling, finite
  deformations, sharp stability, error analysis}

\begin{abstract}
  For a next-nearest neighbour pair interaction model in a periodic
  domain, {\it a priori} and {\it a posteriori} analyses of the
  quasinonlocal quasicontinuum method (QNL-QC) are presented. The
  results are valid for large deformations and essentially guarantee a
  one-to-one correspondence between atomistic solutions and QNL-QC
  solutions. The analysis is based on truncation error and residual
  estimates in negative norms and novel {\it a priori} and {\it a
    posteriori} stability estimates.
\end{abstract}

\maketitle

\section{Introduction}
\label{sec:intro}
Quasicontinuum (QC) methods
\cite{E:2006,Miller:2008,Miller:2003a,Ortiz:1995a,Shimokawa:2004} are
a class of prototypical schemes for coupling atomistic models of
solids to continuum elasticity models. They combine, in principle, the
accuracy of atomistic models for defects with the efficiency of
continuum models for long-range elastic effects. For further detail on
the historical development of the QC method see
\cite{Miller:2008, Miller:2003a}.

Not all QC methods are equally reliable in modelling large atomistic
systems. Recent analyses of QC methods
\cite{legoll,Dobson:2008b,Dobson:qcf.stab,Dobson:arXiv0903.0610,Dobson:qce.stab,Gunzburger:2008a,Luskin:clusterqc,emingyang}
have identified various sources of inconsistencies (in the sense of
numerical analysis; see also Section \ref{sec:consistency}) and
instabilities due to inadequate treatments of the interface. The two
methods that have emerged as the most promising candidates are the
force-based QC method \cite{Dobson:2008a} and the quasinonlocal QC
(QNL-QC) method \cite{Shimokawa:2004} (and its generalization
\cite{E:2006} to longer interaction ranges).

All contributions to the analysis of QC methods cited above consider
only linearized problems, except for \cite{emingyang} where small
deformations from a uniform reference state are admitted. The main
purpose of the present paper is to introduce a simple yet powerful
analytical framework that allows a generalization of these results to
genuinely nonlinear situations.  To present the new ideas that are
required to achieve this in the simplest possible setting, we will
focus on a one-dimensional model problem where the atomistic model is
formulated in terms of a second neighbour pair interaction energy with
applied dead loads. For this model, the QNL-QC method of Shimokawa
{\it et al.}  \cite{Shimokawa:2004} was previously analyzed in
\cite{Dobson:2008b,emingyang}. However, the techniques used there
would be difficult to extend to the large deformation regime, for
which an entirely new approach will be presented here, based on
consistency error estimates in negative Sobolev norms and sharp {\it a
  priori} and {\it a posteriori} stability estimates. Moreover, care
will be taken throughout this paper that the techniques can, in
principle, be extended to higher dimensions and to situations with
defects. This will primarily be achieved by modifying ideas from
\cite{Ortner:2008a} (where a Galerkin projection without continuum
approximation is analyzed), so that no higher regularity results for
the hessian operator of the atomistic or QC energy functionals are
required.

The present paper also develops the first residual-based {\it a
  posteriori} error analysis for the QNL-QC method.  Theorem
\ref{th:exapost} is the first {\it a posteriori} error estimate for
the QNL-QC method which rigorously establishes the existence of exact
solutions for which the estimate applies. The application of goal
oriented {\it a posteriori} error control to QC methods was pioneered
in \cite{arndtluskin07b,arndtluskin07c,prud06}.

At several places in the paper interesting and challenging open
problems are discussed.

Finally, it should be remarked that the purpose of this paper is to
present a new framework for the analysis of the QC method in the
simplest possible setting that still allows the main features one
observes in applications: non-smooth solutions and large
deformations. Numerical experiments illustrating the results presented
here, as well as further interesting observations will be presented
after introducing finite element coarse graining into the framework
\cite{OrtnerWang:2009a}.

\subsection*{Outline}
We begin, in Section \ref{sec:intro:model_problem}, by presenting an
atomistic model problem in a periodic formulation that avoids the
difficulties posed by boundaries. In Section \ref{sec:model:local_QC},
we formulate the QNL-QC approximation {\em without} coarse
graining. For an analysis of the QNL-QC including finite element
coarse graining of the continuum region see
\cite{OrtnerWang:2009a}. In the remainder of section \ref{sec:model}
we discuss some features of the atomistic model and introduce the
necessary technical background for the subsequent analysis.

In Section \ref{sec:consistency} we discuss the concepts of {\em
  truncation error} and {\em residual} in the context of the QC
method. Previous work \cite{Dobson:2008b} has analyzed the truncation
error in weighted $\ell^p$-norms, which leads to suboptimal truncation
error estimates. As a result, somewhat subtle and technically
demanding ideas were required to still obtain (quasi-)optimal error
estimates. By contrast, we will see here that if the ``correct'' norm
is chosen, namely a negative Sobolev norm, then these complications
can be completely avoided. In \cite{emingyang}, an interesting variant
of a negative norm was used that is useful for the nonlinear analysis
but does not lead to (quasi-)optimal error estimates (see Remarks
\ref{rem:spijker} and \ref{rem:apriori}).

The subject of Section \ref{sec:stability} is a careful and general
stability analysis of the QNL-QC method. First, an idea from
\cite{Dobson:qce.stab} will be extended to obtain {\it a priori}
stability results for large deformations. The main novel contribution
in this section, however, is the {\it a posteriori} stability result
in Theorem \ref{th:apost_stab}. This section concludes with a brief
discussion of the difficulties encountered when defects are present,
and how the results might be generalized.

Finally, in Section \ref{sec:existence} we combine the consistency and
stability analyses of the previous sections to obtain {\it a priori}
as well as {\it a posteriori} existence results and error
estimates. The techniques used here are not too dissimilar from
\cite{emingyang}, however, the point of view taken is a very different
one and leads to a different set of existence results. Moreover, the
sharper consistency and stability results of the present paper lead to
error estimates that are truly (quasi-)optimal in the atomistic
spacing and the smoothness of the atomistic solution. See Remarks
\ref{rem:apriori} and \ref{rem:aposteriori} for more detail.

\section{Atomistic Model and QNL-QC Approximation}
\label{sec:model}

\subsection{An atomistic model problem}
\label{sec:intro:model_problem}
In atomistic models surfaces create boundary layers in the deformation
field and must therefore be considered defects in the lattice. We
avoid this issue, by working in an infinite lattice $\eps \Z$, where
$\eps > 0$ is the reference {\em lattice spacing}. Since the
functional analysis becomes unnecessarily difficult in this infinite
domain we will, moreover, restrict the set of admissible deformations
to those which are $N$-periodic displacements from the reference
lattice, that is, we define
\begin{align*}
  \Ys =~& F x + \Us, \\
  \intertext{where $x_\xi = \eps \xi$ for $\xi \in \Z$, and $F > 0$ is
    a {\em macroscopic deformation gradient}, and where}
  \Us =~& \big\{ u \in \R^\Z : u_{\xi+N} = u_\xi \quad \forall \xi \in
  \Z, \text{ and } {\textstyle \sum_{\xi = 1}^N} u_\xi = 0 \big\}.
\end{align*}
The condition $\sum_{\xi = 1}^N u_\xi = 0$ is an artefact of the
periodic boundary conditions, and ensures locally unique solvabililty
of the equilibrium equations to the models that are introduced
below. We set $\eps = 1/N$ throughout so that the reference length of
the period is one.

We denote the algebraic dual of $\Us$ by $\Us^*$. Equipped with the
weighted $\ell^2$-inner product
\begin{displaymath}
  \< v, w \>  = \eps \sum_{\xi = 1}^N v_\xi w_\xi,
\end{displaymath}
$\Us$ becomes a Hilbert space, and hence, any $T \in \Us^*$ can be
represented by a function $z_T \in \Us$ via $T[v] = \< z_T, v \>$ for
$v \in \Us$. Thus, even though forces are usually best understood as
elements of $\Us^*$, we will usually identify them with ``functions''
$f \in \Us$.

The {\em stored energy per period} of a deformation $y \in \Ys$ is
given by a next-nearest neighbour pair interaction model,
\begin{equation}
  \label{eq:a_defn}
  \Phi(y) := \eps \sum_{\xi = 1}^N \big(\phi(y_\xi')
  + \phi(y_\xi' + y_{\xi+1}')\big),
\end{equation}
where $v_\xi' = \eps^{-1} (v_\xi - v_{\xi-1})$ for $v \in \R^\Z$,
and where $\phi$ is a Lennard-Jones type interaction potential which
satisfies the following properties:
\begin{enumerate}
\item[($\phi 1$)] \quad $\phi \in \CC^3((0, +\infty]; \R)$, and
\item[($\phi 2$)] \quad there exists $r_* > 0$ such that $\phi$ is convex
  in $(0, r_*)$ and concave in $(r_*, +\infty)$.
\end{enumerate}
By $\phi \in \CC^3((0, +\infty]; \R)$ we mean that $\phi$ and its
first three derivatives are bounded in any interval $(\delta,
+\infty)$, $\delta > 0$. Implicitly, we actually assume that $\phi(r)$
and its derivatives decay rapidly to zero as $r \to \infty$. This
justifies the next-nearest neighbour interaction model.

We assume, for simplicity, that the atomistic system is subjected to a
dead load $f \in \Us$, so that the total energy of a deformation $y
\in \Ys$ is given by
\begin{displaymath}
  \Phi^\tot(y) = \Phi(y) - \< f, y\>.
\end{displaymath}
Our goal is to solve the minimization problem
\begin{equation}
  \label{eq:minproblem_a}
  y \in \argmin \Phi^\tot(\Ys),
\end{equation}
where $\argmin$ denotes the set of local minimizers. The first order
necessary optimality condition (or, Euler--Lagrange equation) for
\eqref{eq:minproblem_a} is
\begin{equation}
  \label{eq:model:crit_a}
  D\Phi^\tot(y)[u] = 0 \qquad \forall u \in \Us.
\end{equation}
A deformation $y \in \Ys$ satisfying \eqref{eq:model:crit_a} is called
an {\em equilibrium} (or critical point). If an equilibrium $y$ also
satisfies the {\em sufficient second order optimality condition}
\begin{equation}
  \label{eq:1}
  D^2 \Phi^\tot(y)[u,u] > 0 \qquad \forall u \in \Us \setminus \{0\},
\end{equation}
then we say that $y$ is a {\em strongly stable equilibrium}. A
strongly stable equilibrium is an isolated local minimizer of
$\Phi^\tot$ in $\Ys$.

\subsection{The quasi-nonlocal QC method}
\label{sec:model:local_QC}
If a deformation $y \in \Ys$ is smooth, in the sense that $y_\xi'
\approx y_{\xi+1}'$ for all $\xi$, then we can approximate the
second-neighbour terms by
\begin{equation}
  \label{eq:qnl_nnn_splitting}
  \phi(y_\xi' + y_{\xi+1}') \approx 
  \smfrac12 \big\{ \phi(2y_\xi') + \phi(2y_{\xi+1}') \big\},
\end{equation}
which leads to an approximate stored energy functional
\begin{displaymath}
  \Phi_\cb(y) = \eps \sum_{\xi = 1}^N \big\{ \phi(y_\xi')
  + \phi(2 y_\xi') \big\}
  = \eps \sum_{\xi = 1}^N \phi_\cb(y_\xi'),
\end{displaymath}
where $\phi_\cb(r) = \phi(r) + \phi(2r)$ is called the Cauchy--Born
stored energy function. The Cauchy--Born stored energy functional
$\Phi_\cb$ is {\em local} and its minimization can be achieved
efficiently by means of finite element methodology. (Note, though,
that in 1D this is not really an issue \cite{Ortner:2008a}.) Moreover,
it is considered an excellent approximation to the atomistic model in
a smooth deformation regime (see \cite{E:2007a} for a rigorous theory
of the small deformation regime).

If a minimizer of the atomistic model, which we are trying to compute,
has defects in a small localized region but is smooth elsewhere then
we should couple the atomistic model to the continuum Cauchy--Born
model. The QNL-QC method \cite{Shimokawa:2004} is the simplest
energy-based coupling method that is consistent for next-nearest
neighbour models (see Section \ref{sec:consistency} for a discussion
of consistency). For the simple pair interaction energy we use in this
paper it can be defined as follows.

Let $\As \subset \{1, \dots, N\}$ be the {\em atomistic region} and
let $\Cs = \{ 1, \dots, N\} \setminus \As$ be the {\em continuum
  region}. The nearest neigbour terms are left unchanged, while, for
$\xi \in \As$, the next-nearest neighbour interaction $\phi(y_\xi' +
y_{\xi+1}')$ is approximated as in \eqref{eq:qnl_nnn_splitting}. This
procedure leads to the QNL-QC stored energy functional
\begin{equation}
  \label{eq:model:qnl_fcnl}
  \Phi_\h(y) = \eps \sum_{\xi = 1}^N \phi(y_\xi')
  + \eps \sum_{\xi \in \As} \phi(y_\xi' + y_{\xi+1}')
  + \eps \sum_{\xi \in \Cs} \smfrac12 \big\{ \phi(2 y_\xi') 
    + \phi(2 y_{\xi+1}') \big\}.
\end{equation}
Upon defining the total energy
\begin{displaymath}
  \Phi_\h^\tot(y) = \Phi_\h(y) - \< f, y \>,
\end{displaymath}
in the QNL-QC method, we solve the minimization problem
\begin{equation}
  \label{eq:qnl_minproblem}
  y_\h \in \argmin \, \Phi_\h^\tot(\Ys).
\end{equation}
The associated first order optimality condition in variational form is
\begin{equation}
  \label{eq:crit_qc}
  D\Phi_\h^\tot(y_\h)[u] = 0  \qquad \forall u \in \Us.
\end{equation}

\begin{remark}
  For practical reasons, it is important that this approximation can
  be rewritten in the form
  \begin{displaymath}
    \Phi_\h(y) = \Phi_\h^\As(y) + \Phi_\h^\Is(y) + \Phi_\h^\Cs(y),
  \end{displaymath}
  where $\Phi_\h^\As$, $\Phi_\h^\Is$, and $\Phi_\h^\Cs$ are the
  contributions from the atomistic region, the interfacial region, and
  the continuum region, respectively, and so that $\Phi_\h^\As$ is
  precisely the atomistic model restricted to (a subset of) $\As$,
  that $\Phi_\h^\Cs$ is precisely the Cauchy--Born approximation
  restricted to (a subset of) $\Cs$, and $\Phi_\h^\Is$ is an interface
  correction that is cheap to compute. This formulation allows for
  finite element coarsening of the continuum region. For more details
  on this splitting and its importance, see
  \cite{OrtnerWang:2009a}. For the purposes of the analysis presented
  in this paper, however, \eqref{eq:model:qnl_fcnl} is the most useful
  formulation.
\end{remark}

\begin{figure}
  \centering
  \includegraphics[width=9cm]{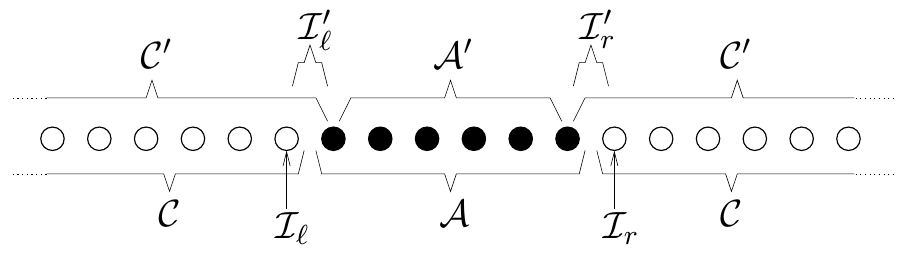}
  \caption{\label{fig:ACI_sets} Visualization of the atomistic region,
    the continuum region, the interface sets, as well as their
    counterparts for bounds, defined in Section
    \ref{sec:model:local_QC}.}
\end{figure}

For future reference, we define the left- and right-interface sets
({\em left} and {\em right} are understood in relation to the
atomistic region(s)) as follows,
\begin{equation}
  \label{eq:model:defn_I}
  \Is_\ell = \big\{ \xi \in \Cs : \xi + 1 \in \Asper \big\}
  \quad \text{and} \quad
  \Is_r = \big\{ \xi \in \Cs : \xi - 1 \in \Asper \big\},
\end{equation}
where $\Asper$ denotes the periodically extended atomistic region,
\begin{displaymath}
  \Asper = \bigcup_{j \in \Z} \big(Nj + \As).
\end{displaymath}
The union $\Is = \Is_\ell \cup \Is_r$ is simply called the {\em
  interface}. A visualization of these, and the following, definitions
is shown in Figure \ref{fig:ACI_sets}.

We assume throughout that
\begin{equation}
  \label{eq:I_dontintersect}
  \Is_\ell \cap \Is_r = \emptyset.
\end{equation}
This means, that any connected component of the continuum region must
contain at least two atoms.

In addition, since we will often write expressions in terms of bonds
rather than atoms, it is sometimes more convenient to use the
following variants of these sets:
\begin{displaymath}
  \Is_\ell' = \Is_\ell+1, \quad \Is_r' = \Is_r, \quad
  \As' = \As \setminus \Is_\ell', \quad \Cs' = \Cs \cup \Is_\ell',
  \quad \text{and} \quad \Is' = \Is_r' \cup \Is_\ell'.
\end{displaymath}
These definitions are made mostly for the sake of an intuitive
notation in the main results. On a first reading, it may be best to
ignore these subtle differences and simply bear in mind that the sets
$\As'$, etc., are variants of the originals that allow for an
attractive presentation.


\subsection{A bound on next-nearest neighbour interactions}
\label{sec:bound_nnn}
It will be a crucial ingredient in the analysis of the QNL-QC method
to assume that next-nearest neighbour interactions remain in the
concave region of the interaction potential, that is, we assume that
the solutions of the atomistic model and of the QNL-QC method satisfy
\begin{equation}
  \label{eq:n2_interaction_bound}
  y_\xi' \geq r_* / 2 \qquad \forall \xi \in \{1, \dots, N\}.
\end{equation}
For most interesting interaction potentials, the case $y_\xi' < r_* /
2$ can only be achieved under extreme compressive forces that will not
usually be observed in experiments. As a matter of fact, one can
safely assume that under such extreme conditions a classical potential
(as opposed to a potential based on quantum mechanics) and zero
temperature statics constitute an inappropriate model to begin with.

Note, moreover, that the condition $y_\xi' \geq r_* / 2$ also ensures
that $\phi$ and its derivatives satisfy uniform bounds. For future
reference, we define the constants
\begin{displaymath}
  C_j(s) := \sup_{r \geq s} \phi^{(j)}(r) \qquad \text{for } s > 0
  \text{ and for } k = 0,1, \dots,
\end{displaymath}
where $\phi^{(j)}$ denotes the $j$th derivative of $\phi$.

\subsection{Further notation and an auxiliary result}
\label{sec:notation}
The second and third finite differences are defined, for $v \in
\R^\Z$, by
\begin{align*}
  v_\xi'' :=~& \eps^{-2}(v_{\xi+1} - 2 v_\xi + v_{\xi-1}), \quad \text{and} \\
  v_\xi''':=~& \eps^{-3}(v_{\xi+1}-3 v_{\xi} + 3 v_{\xi-1} - v_{\xi-2}).
\end{align*}
For $v \in \Us$, the functions $v'$, $v''$, and $v'''$ are understood
as $N$-periodic functions. We normally associate the function value
$v_\xi$ and the second difference $v_\xi''$ with the atom $\xi$, while
the first difference $v_\xi'$ and the third difference $v_\xi'''$ are
normally associated with the cell, or bond, $(\xi-1, \xi)$.

In addition to the weighted $\ell^2_\eps$-inner product we define the
weighted $\ell^p$-norms
\begin{displaymath}
\| v \|_{\ell^p_\eps} := \cases{
  \big( \eps\sum_{\xi=1}^N |v_\xi|^p \big)^{1/p}, & 1 \leq p < \infty, \\[3pt]
  \displaystyle \max_{\xi = 1, \dots, N} |v_\xi|, & p = \infty,
  }
\end{displaymath}
We normally drop the subscript in $\ell^\infty_\eps$ and write
$\ell^\infty$ instead..

The space $\Us$ will often be equipped with the Sobolev-like norms
\begin{displaymath}
  \| v \|_{\Us^{1,p}} = \|v'\|_{\ell^p_\eps}, \quad
  \text{for } v \in \Us \quad \text{ and } p \in [1,\infty].
\end{displaymath}
The space $\Us$ equipped with the $\Us^{1,p}$-norm is then simply
denoted $\Us^{1,p}$. For $p' = p / (p-1)$, the norm on the topological
dual $\Us^{-1,p} := (\Us^{1,p'})^*$ of $\Us^{1,p'}$ is denoted
\begin{displaymath}
  \| T \|_{\Us^{-1,p}} := \| T \|_{(\Us^{1,p'})^*} = 
  \sup_{\substack{ v \in \Us \\ \|v \|_{\Us^{1,p'}} = 1 }} T[v],
  \qquad \text{for } T \in \Us^{-1,p}.
\end{displaymath}

If $\Psi : \Ys \to \R$ is (Fr\'{e}chet) differentiable then its first
variation is denoted $D\Psi$ and is understood as a nonlinear map from
$\Ys$ to $\Us^*$, that is, for $y \in \Ys$, $D\Psi(y) \in \Us^*$, $v
\mapsto D\Psi(y)[v]$. Similarly, the second variation $D^2\Psi$ is
understood as a nonlinear map from $\Ys$ to $L(\Us, \Us^*)$, that is,
for $y \in \Ys$, $D^2\Psi(y) \in L(\Us, \Us^*)$. Equivalently,
$D^2\Psi(y)$ can be understood as a symmetric bilinear form on $\Us
\times \Us$, $(v, w) \mapsto D^2\Psi(y)[v,w]$. We will henceforth make
no distinction between these different interpretations and use
whichever is most convenient at any given moment.

\begin{lemma}[Inverse Function Theorem]
  \label{th:inverse_fcn_thm}
  Let $X, Y$ be Banach spaces, $A$ an open subset of $X$, and let $\Fs :
  A \to Y$ be Fr\'{e}chet differentiable. Suppose that $x_0 \in A$
  satisfies the conditions
  \begin{align*}
    &  \| \Fs(x_0) \|_{Y} \leq \eta, \quad
    \| D\Fs(x_0)^{-1} \|_{L(Y,X)} \leq \sigma, \\
    & \overline{B_X(x_0, 2\eta\sigma)} \subset A, \\
    & \| D\Fs(x_1) - D\Fs(x_2) \|_{L(X,Y)} \leq L \| x_1 - x_2 \|_X \quad
    \text{for} \quad \|x_j - x_0\|_X \leq 2 \eta \sigma, \\
    & \text{ and } 2 L \sigma^{2} \eta < 1,
  \end{align*}
  then there exists $x \in X$ such that $\Fs(x) = 0$ and $\|x -
  x_0\|_X \leq 2\eta\sigma$.
\end{lemma}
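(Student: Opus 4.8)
The plan is to recast the existence of a zero of $\Fs$ as a fixed-point problem for the Newton-type map
\[
  G(x) := x - D\Fs(x_0)^{-1}\Fs(x),
\]
which is well defined on $\overline{B} := \overline{B_X(x_0, 2\eta\sigma)}$, since this ball lies in $A$ by hypothesis and since the bound $\|D\Fs(x_0)^{-1}\|_{L(Y,X)} \le \sigma$ already presupposes that $D\Fs(x_0)$ is boundedly invertible. A point $x$ is a fixed point of $G$ if and only if $D\Fs(x_0)^{-1}\Fs(x) = 0$, and hence, by injectivity of the inverse, if and only if $\Fs(x) = 0$. As $\overline{B}$ is a closed subset of the Banach space $X$ it is complete, so it will suffice to verify the two hypotheses of the Banach fixed-point theorem: that $G$ is a contraction on $\overline{B}$, and that $G(\overline{B}) \subset \overline{B}$.

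For the contraction property I would use the integral form of the mean value theorem: for $x_1, x_2 \in \overline{B}$,
\[
  G(x_1) - G(x_2) = D\Fs(x_0)^{-1}\int_0^1 \big[D\Fs(x_0) - D\Fs(x_2 + t(x_1 - x_2))\big](x_1 - x_2)\dt .
\]
Every point $x_2 + t(x_1-x_2)$ lies in the convex set $\overline{B}$, hence within distance $2\eta\sigma$ of $x_0$, so the Lipschitz hypothesis gives $\|D\Fs(x_0) - D\Fs(x_2+t(x_1-x_2))\|_{L(X,Y)} \le 2L\eta\sigma$. Combined with $\|D\Fs(x_0)^{-1}\|_{L(Y,X)} \le \sigma$ this yields $\|G(x_1) - G(x_2)\|_X \le 2L\sigma^2\eta\,\|x_1 - x_2\|_X$, and the contraction constant $2L\sigma^2\eta$ is strictly less than $1$ precisely by the assumption $2L\sigma^2\eta < 1$.

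The self-mapping property is the more delicate step, and the place where the sharp form of the smallness assumption is used. Writing $G(x) - x_0 = (G(x) - G(x_0)) + (G(x_0) - x_0)$, the second term is the Newton increment, bounded by $\|G(x_0) - x_0\|_X = \|D\Fs(x_0)^{-1}\Fs(x_0)\|_X \le \sigma\eta$. For the first term I would anchor the segment at $x_0$ and exploit that the integrand is then Lipschitz-small proportionally to $t$: since $\|D\Fs(x_0) - D\Fs(x_0 + t(x-x_0))\|_{L(X,Y)} \le Lt\|x-x_0\|_X$, integrating the factor $t$ over $[0,1]$ produces an extra factor $\tfrac12$, giving the quadratic bound $\|G(x) - G(x_0)\|_X \le \tfrac12 L\sigma\|x-x_0\|_X^2 \le 2L\sigma^3\eta^2$ for $x \in \overline{B}$. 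Hence $\|G(x) - x_0\|_X \le \sigma\eta\,(1 + 2L\sigma^2\eta) \le 2\sigma\eta$, where the final inequality is exactly $2L\sigma^2\eta \le 1$. Thus $G(\overline{B}) \subset \overline{B}$, and the Banach fixed-point theorem produces a (unique) $x \in \overline{B}$ with $\Fs(x) = 0$ and $\|x - x_0\|_X \le 2\eta\sigma$.

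I expect the quadratic self-mapping estimate to be the only genuine obstacle. The crude bound $\|G(x) - G(x_0)\|_X \le (2L\sigma^2\eta)\|x - x_0\|_X$ inherited from the contraction constant would force the more restrictive threshold $L\sigma^2\eta \le \tfrac14$; it is the sharper $\tfrac12$-factor, obtained by anchoring the Taylor expansion at $x_0$ rather than using the worst-case Lipschitz constant over the whole ball, that allows the stated condition $2L\sigma^2\eta < 1$.
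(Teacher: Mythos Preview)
Your proof is correct and self-contained. The contraction estimate is routine, and the key observation---anchoring the Taylor expansion at $x_0$ so that the integrand picks up a factor $t$, yielding the quadratic bound $\|G(x)-G(x_0)\|_X \le \tfrac12 L\sigma\|x-x_0\|_X^2$---is exactly what makes the sharp threshold $2L\sigma^2\eta<1$ work rather than the cruder $4L\sigma^2\eta<1$.

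The paper does not argue directly: it simply invokes Theorem~2.1 of \cite{ortner_apostex} with the parameter choices $R=2\eta\sigma$, $\omega(x_0,R)=LR$, and $\bar\omega(x_0,R)=\tfrac12 LR^2$, remarking that similar results follow by tracking constants in any standard inverse function theorem proof. Your argument is precisely that tracking. The quantity $\bar\omega(x_0,R)=\tfrac12 LR^2$ in the paper's citation is the integrated modulus of continuity of $D\Fs$, and corresponds exactly to your quadratic self-mapping estimate; so while the presentations differ (citation versus direct contraction-mapping proof), the underlying mechanism is the same simplified Newton--Kantorovich iteration.
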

\begin{proof}
  The result follows, for example, by applying Theorem 2.1 in
  \cite{ortner_apostex} with the choices $R = 2 \eta\sigma$,
  $\omega(x_0, R) = LR$ and $\bar\omega(x_0, R) = \frac12 L
  R^2$. Similar results can be obtained by tracking the constants in
  most proofs of the inverse function theorem, and assuming local
  Lipschitz continuity of $D\Fs$.
\end{proof}

\section{Consistency} 
\label{sec:consistency}
It was pointed out in \cite{Dobson:2008b} that the QNL-QC method is
{\em not} consistent. This observation was based on the fact that the
authors computed the consistency error in the $\ell^\infty$-norm and
considered the limit as $\eps \to 0$. As a matter of fact, the QNL-QC
(and even the original energy-based QC method \cite{Ortiz:1995a}) is
consistent in appropriate negative norms (see also \cite[Lemma
5.11]{emingyang}, and \cite{OrtnerWang:2009a} for more general
discussion). However, before we prove this result, we briefly review
the notions of truncation error and residual, and explain more clearly
what we mean by consistency.

Let $y \in \argmin\,\Phi^\tot(\Ys)$, then the {\em truncation error}
(associated with this solution) is the linear functional $T \in
\Us^*$, which measures the extent to which $y$ fails to
satisfy~\eqref{eq:model:crit_a}:
\begin{displaymath}
  T[u] = D\Phi_\h^\tot(y)[u] = D\Phi_\h(y)[u] - \< f, u \>.
\end{displaymath}
Conversely, if $y_\h \in \argmin \Phi^\tot_\h(\Ys)$ then the linear
functional $R \in \Us^*$ that measures the extent to which $y_\h$
fails to satisfy (\ref{eq:model:crit_a}) is called the {\em residual}
(of the approximate solution $y_\h$):
\begin{displaymath}
  R[u] = D\Phi^\tot(y_\h)[u] = D\Phi(y_\h)[u] - \< f, u \>. 
\end{displaymath}
Since $y$ satisfies (\ref{eq:model:crit_a}) and $y_\h$ satisfies
(\ref{eq:crit_qc}) we deduce that
\begin{align*}
  T = D\Phi_\h(y) - D\Phi(y), \quad \text{and} \quad
  R = D\Phi(y_\h) - D\Phi_\h(y_\h).
\end{align*}
Thus, estimating the truncation error will automatically give us a
residual estimate and vice-versa. Hence, we will call both $R$ and $T$
simply the {\em consistency error}. (This observation ceases to be
valid if coarse-graining is taken into account.)

Traditionally, we call a numerical method {\em consistent} if the
truncation error tends to zero, in a suitable sense, as the mesh size,
or another discretization parameter, tends to zero. In the present
case, since $\eps$ is fixed (it is best to think of $\eps$ to be in of
the order $O(10^{-3})$ or $O(10^{-4})$), we cannot make such a
definition. We will therefore use the term rather loosely and simply
discuss the {\em order of consistency} of the method. For example
first order consistency would mean that the consistency error can be
bounded by $\eps$ and a factor that depends on the smoothness of
$y$. However, even this nomenclature is not wholly appropriate as the
QNL-QC method has different orders in different parts of the
domain. This is demonstrated in the following theorem, which provides
a sharp bound on the consistency error of the QNL-QC method in the
$\Us^{-1,p}$-norms, $1 \leq p \leq \infty$. Even though we will later
only use $\Us^{-1,2}$ estimates, the more general case is included
here since it requires no additional work.

\begin{theorem}[Consistency in $\Us^{1,p}$]
  \label{th:consistency}
  Let $y \in \Ys$ such that $\min_{\xi \in \Z} y_\xi' > 0$, then
  \begin{displaymath}
    \| D\Phi(y) - D\Phi_\h(y) \|_{\Us^{-1,p}} \leq 
    \eps \bar{C}_2 \|y''\|_{\ell^p_\eps(\Is)} + \eps^2 \bar{C}_3 \big(
    \|y'''\|_{\ell^p(\Cs'\setminus\Is')} + \| y'' \|_{\ell^{2p}_\eps(\Cs)}^2 \big),
  \end{displaymath}
  where $\bar{C}_i = C_i(2\min_{\xi\in\Cs'} y_\xi')$, $i = 2, 3$.
\end{theorem}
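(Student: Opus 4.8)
The plan is to reduce the whole estimate to a single weighted-$\ell^p$ bound on the ``strain force'' generated by the coupling error, and then to exploit a cancellation that is present in the interior of the continuum region but fails at the interface. First I would record that $\Phi$ and $\Phi_\h$ differ only in the second-neighbour terms over $\Cs$, so that after the nearest-neighbour and atomistic next-nearest-neighbour contributions cancel,
\[
  T[u] := D\Phi(y)[u] - D\Phi_\h(y)[u] = \eps\sum_{\xi\in\Cs}\big( g_\xi\, u'_\xi + h_\xi\, u'_{\xi+1}\big),
\]
where $g_\xi = \phi'(y'_\xi + y'_{\xi+1}) - \phi'(2y'_\xi)$ and $h_\xi = \phi'(y'_\xi + y'_{\xi+1}) - \phi'(2y'_{\xi+1})$. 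Throughout I would use the elementary identities $y'_{\xi+1}-y'_\xi = \eps\,y''_\xi$ and $y''_\zeta - y''_{\zeta-1} = \eps\,y'''_\zeta$, which convert finite differences of the strain into the quantities on the right-hand side.

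Next I would pass to the dual norm by collecting the coefficient of each $u'_\zeta$: reindexing the $h$-sum gives $T[u] = \eps\sum_\zeta a_\zeta u'_\zeta$ with $a_\zeta = g_\zeta\,\mathbf 1_{\Cs}(\zeta) + h_{\zeta-1}\,\mathbf 1_{\Cs}(\zeta-1)$, whence H\"older's inequality immediately yields $\|T\|_{\Us^{-1,p}}\le\|a\|_{\ell^p_\eps}$. The task is then to estimate $a_\zeta$ case by case. When exactly one indicator is active, $\zeta$ sits at the interface (either $\zeta\in\Is_r$, contributing $g_\zeta$, or $\zeta-1\in\Is_\ell$, contributing $h_{\zeta-1}$), and a mean value theorem bound $|a_\zeta|\le\bar{C}_2\,\eps\,|y''|$ produces the first-order term $\eps\bar{C}_2\|y''\|_{\ell^p_\eps(\Is)}$, using $\Is_\ell\cap\Is_r=\emptyset$ to avoid double counting. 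When both indicators are active, a short computation shows this happens precisely for $\zeta\in\Cs'\setminus\Is'$, and then $a_\zeta$ is the three-point combination $\phi'(y'_{\zeta-1}+y'_\zeta)+\phi'(y'_\zeta+y'_{\zeta+1})-2\phi'(2y'_\zeta)$, where a cancellation occurs.

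To exploit the cancellation cleanly I would split this combination about $m=2y'_\zeta$ (so that the outer points are $m\pm\eps y''_\zeta$ and $m-\eps y''_{\zeta-1}$) into a \emph{symmetric} second difference of $\phi'$ with spacing $\eps y''_\zeta$, of size $\eps^2(y''_\zeta)^2$ and carrying a factor $\phi'''$, plus a single first difference measuring the asymmetry of the stencil, of size $\eps^2 y'''_\zeta$ and carrying a factor $\phi''$; this is where $y''_\zeta - y''_{\zeta-1}=\eps y'''_\zeta$ enters. Summing in $\ell^p_\eps$ and using the triangle inequality separates the interior contribution into the $y'''$-term over $\Cs'\setminus\Is'$ and the $(y'')^2$-term over $\Cs$, the latter via $\big(\eps\sum|y''|^{2p}\big)^{1/p}=\|y''\|_{\ell^{2p}_\eps}^2$. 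Bounding the sampled derivatives by $\bar{C}_2$ and $\bar{C}_3$ is legitimate because every argument of $\phi'$ above is a sum of two strains on bonds lying in $\Cs'$, hence $\ge 2\min_{\Cs'}y'$; collecting the three contributions then gives the claimed estimate.

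The main obstacle is the interface bookkeeping: one must verify exactly which $\zeta$ yield a genuine second difference (the interior case, producing the $O(\eps^2)$ terms) and which yield only a single difference (the boundary case, producing the $O(\eps)$ term), and simultaneously check that in every case the arguments of $\phi'$, $\phi''$ and $\phi'''$ remain in the range on which $\bar{C}_2,\bar{C}_3$ are valid sup-bounds. This is precisely the role of the primed sets $\As',\Cs',\Is'$: they pin down that the first-order boundary layer is confined to $\Is$ while the interior cancellation is available on all of $\Cs'\setminus\Is'$. Once this classification is fixed, the remainder is routine Taylor expansion together with H\"older estimates.
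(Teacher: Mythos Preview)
Your proposal is correct and follows essentially the same route as the paper: both write $T[u]$ as a sum over $\Cs$, regroup the coefficients of each $u'_\zeta$, distinguish interface bonds (where only one of $g_\zeta,h_{\zeta-1}$ survives and a single mean-value estimate gives the first-order term) from interior bonds in $\Cs'\setminus\Is'$ (where the three-point combination $\phi'(y'_{\zeta-1}+y'_\zeta)+\phi'(y'_\zeta+y'_{\zeta+1})-2\phi'(2y'_\zeta)$ admits a second-order expansion), and finish with H\"older. Your symmetric-second-difference-plus-asymmetry split of the interior term is algebraically equivalent to the paper's device of adding two first-order Taylor expansions with quadratic remainders about $2y'_\zeta$; in particular both arguments naturally produce a factor $\phi''$ (hence $\bar C_2$) in front of the $y'''$ contribution, so your bound is at least as sharp as the one stated.
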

\begin{proof}
  The first variations of the atomistic and QNL-QC functionals at $y$
  are, respectively, given by
  \begin{align}
    \label{eq:DPhi}
    D\Phi(y)[u] = \eps \sum_{\xi = 1}^N \phi'(y_\xi')u_\xi'
    ~+~& \eps \sum_{\xi = 1}^N \phi'(y_\xi' + y_{\xi+1}')[u_\xi' + u_{\xi+1}'], \\
    \intertext{and}
    \label{eq:DPhih}
  D\Phi_\h(y)[u] = \eps \sum_{\xi = 1}^N \phi'(y_\xi')u_\xi'
  ~+~& \eps \sum_{\xi \in \As} \phi'(y_\xi' + y_{\xi+1}')[u_\xi'+u_{\xi+1}'] \\
  \notag
  ~+~& \eps \sum_{\xi \in \Cs} \smfrac12 \big\{ 
  2 \phi'(2 y_\xi') u_\xi' + 2 \phi'(2y_{\xi+1}') u_{\xi+1}' \big\}.
  \end{align}
  The consistency error $T := D\Phi(y) - D\Phi_\h(y)$ therefore
  satisfies
  \begin{equation}
    \label{eq:truncation_error}
    T[u] = \eps \sum_{\xi \in \Cs} \big\{ 
    \phi'(y_\xi' + y_{\xi+1}')[u_\xi'+u_{\xi+1}']
    - \phi'(2 y_\xi') u_\xi' - \phi'(2 y_{\xi+1}') u_{\xi+1}' \big\}
  \end{equation}
  for all $u \in \Us$.  From (\ref{eq:truncation_error}) we can
  immediately obtain a first-order consistency error estimate,
  however, one can improve upon this by taking into account symmetries
  of the interaction law.

  If we collect all coefficients that premultiply a term $u_\xi'$,
  then we obtain second-order errors in the ``interior'' of the
  continuum region and first-order errors in the interface. To see
  this, we recall the definitions of the interface regions from
  Section \ref{sec:model:local_QC}, and compute
  \begin{align}
    \notag
    T[u] =~& \eps \sum_{\xi \in \Is_\ell} \big\{ \phi'(y_{\xi}'+y_{\xi+1}') 
    - \phi'(2y_{\xi+1}') \big\} u_{\xi+1}'
    + \eps \sum_{\xi \in \Is_r} \big\{ \phi'(y_\xi'+y_{\xi+1}') 
    - \phi'(2y_\xi') \big\} u_\xi' \\
    \label{eq:cons:T_2}
    & + \eps\sum_{\xi \in \Cs' \setminus \Is'} 
    \big\{  \phi'(y_{\xi-1}'+y_\xi') + \phi'(y_\xi' + y_{\xi+1}') 
    - 2 \phi'(2y_\xi') \big\} u_\xi'.
  \end{align}

  Taylor expansions of the first and second terms in curly brackets
  yield
  \begin{equation}
    \label{eq:cons:taylor_1}
    \begin{split}
      \phi'(y_\xi'+y_{\xi+1}') - \phi'(2y_{\xi+1}') =~&
      - \eps \phi''(2 y_{\xi+1}') y_\xi''
      + \smfrac12 \eps^2 \phi'''(\vartheta_{1,\xi}) |y_\xi''|^2,
      \quad \text{and} \\
      \phi'(y_\xi'+y_{\xi+1}') - \phi'(2y_{\xi}') =~& 
      \eps \phi''(2 y_{\xi}') y_\xi''
      + \smfrac12 \eps^2 \phi'''(\vartheta_{2,\xi}) |y_\xi''|^2,  
    \end{split}
  \end{equation}
  where $\vartheta_{1,\xi} \in \conv\{ y_\xi' + y_{\xi+1}', 2
  y_{\xi+1}' \}$ and $\vartheta_{2,\xi} \in \conv\{ y_\xi' +
  y_{\xi+1}', 2 y_{\xi}' \}$, and hence $|\phi'''(\vartheta_{i,\xi})|
  \leq C_3(2 \min_{\xi\in\Cs'} y_\xi') =: \bar{C}_3$ for $i = 1, 2$.

  To expand the third term in curly brackets in (\ref{eq:cons:T_2}),
  we use (\ref{eq:cons:taylor_1}) twice to obtain
  \begin{align*}
    & \phi'(y_\xi' + y_{\xi+1}') + \phi'(y_{\xi-1}'+y_\xi') - 2\phi'(2 y_\xi') \\
    =~& \eps^2 \phi'''(2 y_\xi') y_\xi'''
    +  \eps^2 \smfrac12 \big\{ \phi'''(\vartheta_{2,\xi}) |y_\xi''|^2
    + \phi'''(\vartheta_{1,\xi-1}) |y_{\xi-1}''|^2 \big\}.
  \end{align*}

  Inserting these expansions and bounds into (\ref{eq:cons:T_2}), and
  applying several weighted H\"{o}lder inqualities we can estimate
  \begin{align*}
    \big|T[u]\big| 
    \leq~& \eps \sum_{\xi \in \Is_\ell} \big\{ \eps \bar{C}_2 |y_\xi''|
    + \eps^2 \smfrac12 \bar{C}_3 |y_\xi''|^2 \big\} |u_{\xi+1}'|
    + \eps \sum_{\xi \in \Is_r} \big\{ \eps \bar{C}_2 |y_\xi''|
    + \eps^2 \smfrac12 \bar{C}_3 |y_\xi''|^2 \big\} |u_{\xi}'| \\
    & + \eps \sum_{\xi \in \Cs \setminus \Is_r} \big\{ \
    \eps^2 \bar{C}_3 |y_\xi'''| + \eps^2 \smfrac12 \bar{C}_3 \big( 
    |y_\xi''|^2 + |y_{\xi-1}''|^2 \big) \big\} |u_\xi'| \\
    \leq~& \big\{\eps \bar{C}_2 \| y'' \|_{\ell^p_\eps(\Is)} 
    + \eps^2 \bar{C}_3 \| y''' \|_{\ell^p_\eps(\Cs\setminus\Is_r)}
    + \eps^2 \bar{C}_3 \| y'' \|_{\ell^{2p}_\eps(\Cs)}^2 \big\} 
    \|u'\|_{\ell^{p'}_\eps}.
    \qedhere
  \end{align*}
\end{proof}

\begin{remark}
  The consistency error has no contribution from the atomistic region,
  which is not surprising since the model is exact in $\As$. In the
  continuum region, the consistency error reflects the second-order
  accuracy of the Cauchy--Born approximation for simple lattices
  \cite{E:2007a}. Finally, in the interface, the consistency error is
  only of first order. By estimating $\|y''\|_{\ell^p_\eps(\Is)}$
  above by $\|y''\|_{\ell^\infty(\Is)}$, we can however obtain
  additional powers of $\eps$,
  \begin{equation}
    \label{eq:higher_epsn_I}
    \eps \bar{C}_2 \|y''\|_{\ell^p_\eps(\Is)} \leq
    \eps^{1+1/p} \bar{C}_2 (\# \Is)^{1/p} \|y''\|_{\ell^\infty(\Is)},
  \end{equation}
  but at the cost of a dependency on the size of the interfacial
  region. Moreover it should be stressed that such an estimate might
  hide the significantly reduced accuracy of the QNL-QC method in the
  interface. 

  This loss of accuracy at the interface can potentially become
  problematic if very high accuracy is sought. For example, a P2
  finite element discretization of the continuum region will not be
  able to increase the accuracy of the computation beyond the error
  introduced in this interface. Thus, a higher order coupling
  mechanism would be a highly desirable goal.

  A finer analysis of this reduced accuracy for a linearized model
  problem can be found in \cite{Dobson:2008b}.
\end{remark}

\begin{remark}
  Generalizations of Theorem \ref{th:consistency} for a variety of QC
  methods can be found in \cite{OrtnerWang:2009a}. There, we also
  include finite element coarse graining in the analysis. We use
  negative norm consistency error estimates, similar to Theorem
  \ref{th:consistency}, to control what we call the {\em model error}.
\end{remark}

\begin{remark}
  \label{rem:spijker}
  In \cite[Lemma 5.11]{emingyang} a negative-norm truncation error
  estimate with respect to a so-called {\em Spijker norm} can be
  found, which, in the language of the present work, reads
  \begin{displaymath}
    \big| D\Phi(y)[u] - D\Phi_\h(y)[u] \big|
    \leq C \eps \big( \eps^{-1/2} \| u' \|_{\ell^2_\eps} \big).
  \end{displaymath}
  This estimate follows immediately from \eqref{eq:higher_epsn_I}. The
  reason for choosing this norm is that it guarantees uniform
  closeness of gradients (i.e., $\eps^{-1/2} \| u' \|_{\ell^2_\eps}
  \geq \|u'\|_{\ell^\infty}$), a fact that is highly useful in the
  nonlinear analysis. Unfortunately it leads to suboptimal error
  estimates. In the analysis of Section \ref{sec:existence}, we will
  circumvent this problem by using inverse estimates, but still retain
  the optimal truncation error estimates.
\end{remark}

\section{Stability}
\label{sec:stability}
Having established a fairly sharp consistency error estimate, we turn
to the question of stability of the QNL-QC method. Stability estimates
are usually easiest to establish in a Hilbert space, particularly,
when the operator under investigation is coercive. Coercivity of the
QNL-QC hessian evaluated at the reference state $y = Fx$ was
established in \cite{Dobson:2008b,emingyang}, and it was shown in
\cite{Dobson:qce.stab} that these estimates sharply reflect the
stability of the full atomistic model. In the following sections we
will generalize these results to various situations that admit large
deformations.

In one dimension, it is still relatively straightforward to derive
stability bounds in the space $\Us^{1,p}$. For example, several
stability results for atomistic and quasicontinuum models in the space
$\Us^{1,\infty}$, which are particularly useful for a nonlinear
analysis, were derived in \cite{Ortner:2008a}. However, since such
results would be very difficult (if not impossible) to obtain in
2D/3D, we will attempt in the next sections to use only
$\Us^{1,2}$-stability results in our analysis and still obtain
similarly strong nonlinear results.

\subsection{Preliminary remarks}
\label{sec:stab}
{\em Linear stability} for minimization problems is normally
associated with the coercivity constant (or smallest eigenvalue) in an
appropriate function space. Thus, to relate the stability of the
atomistic model and that of the QNL-QC, we would like to prove a
result of the type
\begin{equation}
  \label{eq:approx_stab}
  c(y) := \inf_{ \substack{u \in \Us \\\|u\|_{\Us^{1,2}} = 1  }} D^2\Phi(y)[u,u]
   \approx \inf_{ \substack{u \in \Us \\\|u\|_{\Us^{1,2}} = 1  }} D^2\Phi_\h(y)[u,u] 
   =: c_\h(y).
\end{equation}
If the approximate hessian $D^2\Phi_\h$ could be obtained by
perturbing the coefficients of $D^2\Phi$ then one could obtain such a
result from an error estimate on the hessians, that is, from a bound on
$\| D^2\Phi(y) - D^2 \Phi_\h(y) \|_{L(\Us^{1,2},
  \Us^{-1,2})}$. However, if we compute the two hessians explicitly,
\begin{align*}
  D^2\Phi(y)[u,u] = \eps\sum_{\xi = 1}^N \phi''(y_\xi')|u_\xi'|^2
  ~&+~ \eps \sum_{\xi = 1}^N \phi''(y_\xi' + y_{\xi+1}') |u_\xi' + u_\xi'|^2,
  \quad \text{and} \\
  D^2\Phi_\h(y)[u,u] = \eps\sum_{\xi = 1}^N \phi''(y_\xi')|u_\xi'|^2
  ~&+~ \eps \sum_{\xi \in \As} \phi''(y_\xi' + y_{\xi+1}') |u_\xi' + u_\xi'|^2 \\
  &+~ \eps \sum_{\xi \in \Cs} \smfrac12 \big\{ \phi''(2y_\xi') |2u_\xi'|^2
  +~ \phi''(2y_{\xi+1}') |2u_{\xi+1}'|^2 \big\},
\end{align*}
we observe that in the continuum region $D^2\Phi(y)[u,u]$ contains
non-zero coefficients for the mixed terms $u_\xi' u_{\xi+1}'$ which
are replaced by diagonal terms in the QNL-QC Hessian. This shows that
it is impossible to obtain a useful estimate on the difference of
the Hessians. Instead, we must aim to prove (\ref{eq:approx_stab})
directly.

The crucial observation which is the basis of the stability analysis
in this section is that the non-local hessian terms $|u_\xi' +
u_{\xi+1}'|^2$ can be rewritten in terms of the local quantities
$|u_\xi'|^2$ and $|u_{\xi+1}'|^2$ and a strain-gradient correction,
\begin{equation}
  \label{eq:stab:straingrad}
  |u_\xi' + u_{\xi+1}'|^2 = 2 |u_\xi'|^2 + 2 |u_{\xi+1}'|^2
  - \eps^2 |u_\xi''|^2.
\end{equation}
Using this simple formula it is straightforward to rewrite the
Hessians in the form
\begin{equation}
  \label{eq:stab:hessians}
  \begin{split}
    D^2\Phi(y)[u,u] =~& \eps \sum_{\xi = 1}^N A_\xi |u_\xi'|^2 + \eps \sum_{\xi = 1}^N \eps^2 B_\xi |u_\xi''|^2, \quad \text{and} \\
    D^2\Phi_\h(y)[u,u] =~& \eps \sum_{\xi = 1}^N \tilde{A}_\xi |u_\xi'|^2 + \eps \sum_{\xi \in \As} \eps^2 B_\xi |u_\xi''|^2,
  \end{split}
\end{equation}
where
\begin{align*}
  A_\xi =~& \phi''(y_\xi') + 2\phi''(y_{\xi-1}'+y_\xi')
  + 2 \phi''(y_\xi'+y_{\xi+1}'), \\
  \tilde{A}_\xi =~& \phi''(y_\xi') + \cases{ 
    2\phi''(y_{\xi-1}'+y_\xi') + 2 \phi''(y_\xi'+y_{\xi+1}'), &
    \xi \in \As', \\
    2 \phi''(y_{\xi-1}'+y_\xi') + 2 \phi''(2y_\xi'), & 
    \xi \in \Is_r', \\
    2 \phi''(y_{\xi}'+y_{\xi+1}') + 2 \phi''(2y_\xi'), &     
    \xi \in \Is_\ell', \\
    4 \phi''(2y_\xi'), & \xi \in \Cs' \setminus \Is',
  } \qquad \text{and} \\
  B_\xi =~& - \phi''(y_\xi'+y_{\xi+1}').
\end{align*}
The proof of (\ref{eq:stab:hessians}) requires purely algebraic
manipulations.

We observe that, apart from estimating the effect of replacing $A_\xi$
by $\tilde{A}_\xi$, which is the purpose of the next lemma, our main
challenge will be to understand the effect of dropping the strain
gradient term in the continuum region.

\begin{lemma}
  \label{th:error_A_lemma}
  Let $y \in \Ys$ such that $\min_{\xi \in \Z} y_\xi' > 0$, then
  \begin{displaymath}
    \|A_\xi - \tilde{A}_\xi\|_{\ell^\infty} \leq
    \eps 4 \bar{C}_3 \| y'' \|_{\ell^\infty(\Cs)},
  \end{displaymath}
  where $\bar{C}_3 = C_3(2\min_{\xi \in \Cs'} y_\xi')$.
\end{lemma}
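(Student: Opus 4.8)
The plan is to evaluate the difference $A_\xi - \tilde A_\xi$ separately on each of the four index sets $\As'$, $\Is_r'$, $\Is_\ell'$, and $\Cs'\setminus\Is'$ that occur in the definition of $\tilde A_\xi$. On $\As'$ the two expressions coincide termwise, so $A_\xi-\tilde A_\xi=0$ there and only the three continuum-type cases contribute. In each remaining case $\tilde A_\xi$ is obtained from $A_\xi$ by replacing one or two next-nearest-neighbour terms $2\phi''(y_\xi'+y_{\xi+1}')$ or $2\phi''(y_{\xi-1}'+y_\xi')$ by the Cauchy--Born term $2\phi''(2y_\xi')$; thus $A_\xi-\tilde A_\xi$ is a sum of at most two differences of the form $2\{\phi''(y_{\xi-1}'+y_\xi')-\phi''(2y_\xi')\}$ and $2\{\phi''(y_\xi'+y_{\xi+1}')-\phi''(2y_\xi')\}$.

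To estimate these I use the elementary finite-difference identities $y_{\xi+1}'=y_\xi'+\eps y_\xi''$ and $y_{\xi-1}'=y_\xi'-\eps y_{\xi-1}''$, which give $y_\xi'+y_{\xi+1}'=2y_\xi'+\eps y_\xi''$ and $y_{\xi-1}'+y_\xi'=2y_\xi'-\eps y_{\xi-1}''$. Each bracketed difference is therefore of the form $\phi''(2y_\xi'\pm\eps y_\eta'')-\phi''(2y_\xi')$ for the relevant second difference ($\eta=\xi$ or $\eta=\xi-1$), and the mean value theorem rewrites it as $\pm\eps\, y_\eta''\,\phi'''(\zeta)$ with $\zeta$ in the convex hull of $2y_\xi'$ and $y_\xi'+y_{\xi\pm1}'$. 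Bounding $|\phi'''(\zeta)|\le\bar C_3$ then gives $|A_\xi-\tilde A_\xi|\le 2\eps\bar C_3|y_\eta''|$ in the two interface cases (a single replacement) and $|A_\xi-\tilde A_\xi|\le 2\eps\bar C_3(|y_{\xi-1}''|+|y_\xi''|)$ in the interior case $\Cs'\setminus\Is'$ (two replacements); taking the $\ell^\infty$-norm over $\Cs$ produces the claimed constant $4$.

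The step I expect to require the most care is the set-theoretic bookkeeping that validates the two bounds just used. First, to apply $|\phi'''(\zeta)|\le\bar C_3=C_3(2\min_{\xi\in\Cs'}y_\xi')$ I must check $\zeta\ge 2\min_{\xi\in\Cs'}y_\xi'$; since $\zeta$ lies between $2y_\xi'$ and $y_\xi'+y_{\xi\pm1}'$, this reduces to showing the neighbouring strains $y_{\xi\pm1}'$ carry an index in $\Cs'$. This is exactly why the minimum is taken over the enlarged set $\Cs'=\Cs\cup\Is_\ell'$ rather than $\Cs$: for $\xi\in\Is_\ell$ the right neighbour $\xi+1$ lands in $\Is_\ell'\subseteq\Cs'$, while assumption \eqref{eq:I_dontintersect} (every continuum component has at least two atoms) keeps the remaining neighbours inside $\Cs$. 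Second, I must verify that the surviving second differences, namely $y_\xi''$ and $y_{\xi-1}''$, are indexed in $\Cs$, so that they are controlled by $\|y''\|_{\ell^\infty(\Cs)}$ on the right-hand side; using $\Cs'\setminus\Is'=\Cs\setminus\Is_r$ one sees that $\xi\notin\Is_r$ forces $\xi-1\in\Cs$, and in the two interface cases the relevant index again lies in $\Is_r\subseteq\Cs$ or $\Is_\ell\subseteq\Cs$. Once this bookkeeping is in place the estimate follows immediately.
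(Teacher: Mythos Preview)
Your proposal is correct and follows essentially the same approach as the paper's proof: both split into the four cases from the definition of $\tilde A_\xi$, apply the mean value theorem to each replaced second-neighbour term, and bound $|\phi'''|$ by $\bar C_3$. The paper's version is terser and simply states that ``a similar calculation for the interfaces, and sacrificing a factor two there'' yields the result, whereas you spell out the index bookkeeping (why the relevant $y''$ values land in $\Cs$ and why the intermediate points $\zeta$ satisfy $\zeta\ge 2\min_{\Cs'} y_\xi'$) more carefully than the paper does.
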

\begin{proof}
  For $\xi \in \As'$, $A_\xi = \tilde{A}_\xi$. For $\xi \in \Cs'
  \setminus \Is'$, we have
  \begin{align*}
    A_\xi - \tilde{A}_\xi 
    =~& 2 (\phi''(y_{\xi-1}'+y_\xi') - \phi''(2y_\xi'))
    + 2 (\phi''(y_{\xi}'+y_{\xi+1}') - \phi''(2y_\xi')) \\
    =~& - \eps 2 \phi'''(\vartheta_{1,\xi}) y_{\xi-1}''
    + \eps 2 \phi'''(\vartheta_{2,\xi}) y_\xi'',
  \end{align*}
  where $\vartheta_{1,\xi} \in \conv\{ y_{\xi-1}'+y_\xi', 2 y_\xi' \}$
  and $\vartheta_{2,\xi} \in \conv\{ y_{\xi}'+y_{\xi+1}', 2 y_\xi'
  \}$. Hence, we obtain
  \begin{displaymath}
    |A_\xi - \tilde{A}_\xi| \leq 2 \eps \bar{C}_3 |y_{\xi-1}''| + 2 \eps \bar{C}_3 |y_\xi''| \leq 4 \eps \bar{C}_3 \|y''\|_{\Cs}.
  \end{displaymath}
  Performing a similar calculation for the interfaces, and sacrificing
  a factor two there, we obtain the stated result.
\end{proof}

\begin{remark}
  If $\phi \in \CC^4((0, +\infty])$ then the result of Lemma
  \ref{th:error_A_lemma} can be improved. Using similar calculations
  as in the proof of Proposition \ref{th:consistency}, one obtains
  \begin{displaymath}
    \| A_\xi - \tilde{A}_\xi \|_{\ell^\infty} \leq \max\big(
    \eps 2 \bar{C}_3 \|y''\|_{\ell^\infty(\Is)}, 
    \eps^2 \bar{C}_4 (\|y'''\|_{\ell^\infty(\Cs')} +\|y''\|_{\ell^\infty(\Cs)}^2)
    \big).
  \end{displaymath}
  Even though this result is clearly sharper than Lemma
  \ref{th:error_A_lemma}, it carries only limited practical value due
  to the fact that the maximum in $\|y''\|_{\ell^\infty(\Cs)}$ will
  typically be attained at the interface.
\end{remark}

\medskip Before formulating the first stability result, we briefly
discuss the nature of the coefficients $B_\xi$. In view of the
discussion in Section \ref{sec:bound_nnn} we have assumed throughout
that $y_\xi' \geq r_*/2$ for all $\xi \in \Z$ and consequently,
$\phi''(y_\xi' + y_{\xi+1}') \leq 0$ for all $\xi$, that is, we obtain the following result.

\begin{lemma}
  If $y_\xi' \geq r_*/2$ for all $\xi \in \Z$, then
  \begin{equation}
    \label{eq:sign_Bxi}
    B_\xi \geq 0 \qquad \forall \xi \in \Z.
  \end{equation}
\end{lemma}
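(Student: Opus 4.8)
The plan is to read the claim off directly from the definition $B_\xi = -\phi''(y_\xi' + y_{\xi+1}')$ together with the convexity/concavity assumption ($\phi 2$) on the interaction potential. Since $B_\xi \geq 0$ is equivalent to $\phi''(y_\xi' + y_{\xi+1}') \leq 0$, it suffices to show that the argument $y_\xi' + y_{\xi+1}'$ always lies in the concave region of $\phi$.

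First I would use the standing hypothesis $y_\xi' \geq r_*/2$, which holds for every index, to bound the second-neighbour strain from below. Adding the bounds for two consecutive bonds yields
\begin{displaymath}
  y_\xi' + y_{\xi+1}' \geq \smfrac{r_*}{2} + \smfrac{r_*}{2} = r_*
  \qquad \forall \xi \in \Z,
\end{displaymath}
so that the argument of $\phi''$ always lies in $[r_*, +\infty)$.

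Next I would invoke ($\phi 2$), which asserts that $\phi$ is concave on $(r_*, +\infty)$, whence $\phi'' \leq 0$ there. On the open interval this is immediate; at the endpoint $r_*$ itself, continuity of $\phi''$ (guaranteed by ($\phi 1$), since $\phi \in \CC^3$) gives $\phi''(r_*) = \lim_{r \to r_*^+} \phi''(r) \leq 0$ as well. Hence $\phi''(y_\xi' + y_{\xi+1}') \leq 0$ on the whole range $[r_*, +\infty)$, and therefore $B_\xi = -\phi''(y_\xi' + y_{\xi+1}') \geq 0$ for every $\xi \in \Z$.

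There is no genuine obstacle here; the only point requiring the slightest care is the boundary case $y_\xi' + y_{\xi+1}' = r_*$, handled by the continuity of $\phi''$ just noted. It is worth emphasising that the per-bond lower bound $r_*/2$ is precisely calibrated so that the sum of two consecutive strains reaches the concavity threshold $r_*$; this is exactly the rationale behind the choice of the constant in the next-nearest-neighbour bound \eqref{eq:n2_interaction_bound}, and it is why the sign condition on $B_\xi$, rather than being an extra assumption, follows for free.
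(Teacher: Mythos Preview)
Your argument is correct and is essentially the same as the paper's: the paper observes in the paragraph preceding the lemma that $y_\xi' \geq r_*/2$ implies $\phi''(y_\xi' + y_{\xi+1}') \leq 0$ by concavity, which is exactly your reasoning. Your additional remark about the boundary case $y_\xi' + y_{\xi+1}' = r_*$ via continuity of $\phi''$ is a minor refinement the paper leaves implicit.
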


\subsection{An illustrative example}
\label{sec:stab_ex}
Before we turn to the stability estimates, we briefly discuss an
example that further stresses the difference between atomistic and QC
hessians. We consider the case $y = Fx$ and $\As = \emptyset$, that
is, the QNL-QC method reduces to the local QC (or simply,
Cauchy--Born) method. In that case, we have
\begin{displaymath}
  D^2\Phi(Fx)[u,u] = A \| u' \|_{\ell^2_\eps}^2 + \eps^2 B \|u''\|_{\ell^2_\eps}
  \quad \text{and} \quad D^2 \Phi_\h(Fx)[u,u] = A \|u'\|_{\ell^2_\eps}^2,
\end{displaymath}
where $A = \phi''(F) + 4 \phi''(2F)$ and $B = - \phi''(2F)$. 

In this case, the $\Us^{1,2}$-spectrum of $D^2\Phi_\h(Fx)$ contains only
the eigenvalue $A$ and every vector $u \in \Us$ is an eigenvector. On
the other hand, the $\Us^{1,2}$-spectrum of $D^2\Phi(Fx)$ is given by
\begin{align*}
  \sigma_{\Us^{1,2}}(D^2\Phi(Fx)) = \big\{ A + B \mu_j : 
  j = 1, \dots, N-1 \big\}, 
 \quad \text{where } \mu_j = 4 \sin^2\big( \smfrac12 j \pi \eps \big);
\end{align*}
see \cite[Sec. 3.1]{Dobson:qce.stab} and \cite[Lemma
1]{Dobson:qcf.iter} for similar analyses.

Thus, we see that the eigenvalues of the low eigenmodes are well
approximated by the local QC stability constant $A$, and in
particular,
\begin{displaymath}
  c(Fx) = A + 4 B \sin^2(\pi \eps) = A + O(\eps^2) = c_\h(Fx) + O(\eps^2).
\end{displaymath}
On the other hand, the eigenvalues corresponding to the high frequency
modes are significantly larger, in other words, high frequency
perturbations are considerably more expensive in the atomistic model
than in the continuum model. This discrepancy is not too surprising
since the basic assumption of the Cauchy--Born model is the absence of
high-frequence modes in the deformation.

\subsection{An a priori stability result}
\label{sec:stab_apriori}
A common approach to establish the stability of a QC method is to
assume that next-nearest neighbour interactions are dominated by
nearest-neighbour interactions
\cite{Dobson:2008a,LinP:2006a,emingyang,Ortner:2008a}. This leads to
particularly simple results that can even be made fairly sharp as the
numerical experiments in \cite{Ortner:2008a} demonstrate. If one
wishes to include more than the simplest examples in such stability
results, then one should specify classes of atomistic solutions that
are of interest and prove, for each such class, that the QC hessian is
stable when evaluated at those deformations.  In \cite{Ortner:2008a},
two main types of stable solutions were identified for a variant of
the model problem \eqref{eq:model:crit_a}: (i) elastic deformations
and (ii) deformations with a single crack. Here we will focus only on
elastic solutions only, for which we obtain the following result.

\begin{proposition}[Stability of elastic states]
  \label{th:simple_stab_el}
  Let $y \in \Ys$, such that $y_\xi' \geq r_*/2$ for all $\xi \in \Z$,
  then
  \begin{equation}
    \label{eq:simple_stab_el}
    c(y) \geq \min_{\xi = 1, \dots, N} A_\xi,
    \quad \text{and} \quad
    c_\h(y) \geq \min_{\xi = 1, \dots, N} \tilde{A}_\xi,
  \end{equation}
  where $c(y), c_\h(y)$ are defined in \eqref{eq:approx_stab} and
  $A_\xi, \tilde{A}_\xi$ are defined in \eqref{eq:stab:hessians}.
\end{proposition}
\begin{proof}
  Both bounds in \eqref{eq:simple_stab_el} follow immediately from
  \eqref{eq:stab:hessians} and from the fact that $B_\xi \geq 0$ for
  all $\xi \in \Z$.
%
\end{proof}

\begin{remark}
  Proposition \ref{th:simple_stab_el} provides a particularly simple
  example of atomistic configurations for which the QNL-QC method is
  stable. Due to the representation of next-nearest neighbour
  interactions developed in Section \ref{sec:stab}, these results are
  in fact significantly sharper than those found in
  \cite{Dobson:2008a,emingyang,Ortner:2008a}, and will be sufficient
  whenever elastic effects dominate. Since there is no regularity
  assumption on $y$, simple singularities are included as
  well.
\end{remark}

\begin{remark}
  It must be said at this point that the stated goal, namely a proof
  of $c(y) \approx c_\h(y)$, or at least $c_\h(y) \gtrsim c(y)$, up to
  a controllable error, has not been obtained here (however, a reverse
  inequality will be established in the next section). It can be
  easily seen that such a result would be related to the regularity of
  eigenfunctions in the continuum region. Namely, let $\bar u$ be an
  eigenfunction for $D^2 \Phi_\h(y)$, associated with the eigenvalue
  $c_\h(y)$, then
  \begin{align*}
    c(y) =~& \min_{\|u'\|_{\ell^2_\eps} = 1} D^2 \Phi(y)[u,u] \\
    \leq~& D^2\Phi(y)[\bar u, \bar u] \\
    \leq~& c_\h(y) 
    + \big| D^2\Phi(y)[\bar u, \bar u] 
    - D^2\Phi_\h(y)[\bar u, \bar u]  \big|.
  \end{align*}
  Using \eqref{eq:stab:hessians} and Lemma \ref{th:error_A_lemma}, we
  obtain the estimate
  \begin{displaymath}
    \big| D^2\Phi(y)[\bar u, \bar u] 
    - D^2\Phi_\h(y)[\bar u, \bar u]  \big| \leq 4 \bar{C}_3 \eps \|y''\|_{\ell^\infty(\Cs)} + \eps^3 \sum_{\xi \in \Cs} B_\xi |\bar u_\xi''|^2.
  \end{displaymath}
  Thus, if one could establish an $O(1)$ bound on
  $\|\bar{u}''\|_{\ell^2_\eps(\Cs)}$, for example in terms of the smoothness
  of $y$ in the continuum region, then one would obtain
  \begin{displaymath}
    c_\h(y) \geq c(y) - O(\eps).
  \end{displaymath}
  However, such a bound is far from trivial to obtain, even in one
  dimension.
\end{remark}

\subsection{A posteriori stability}
\label{sec:stab_apost}
If we assume that $y = y_\h$ is a local minimizer of the QNL-QC
approximation, then we can check the condition whether
$D^2\Phi_\h(y_\h) > 0$ {\it a posteriori} by solving a (generalized)
eigenvalue problem. The question we then ought to ask is whether this
stability carries over to the full atomistic model, that is, whether
also $D^2\Phi(y_\h) > 0$. If this were not the case then it cannot be
guaranteed that a local minimizer in the QNL-QC model corresponds to a
local minimizer in the atomistic model. However, for our model problem
the answer is surprisingly simple and is given in the following
result.

\begin{theorem}[A Posteriori Stability]
  \label{th:apost_stab}
  Let $y \in \Ys$ such that $\min_{\xi \in \Z} y_\xi' > 0$, and such
  that
  \begin{displaymath}
    \phi''(y_\xi'+y_{\xi+1}') \leq 0 \qquad \forall \xi \in \Cs,
  \end{displaymath}
  then
  \begin{displaymath}
    c(y) \geq c_\h(y) - \eps 4 \bar{C}_3 \|y''\|_{\ell^\infty(\Cs)},
  \end{displaymath}
  where $\bar{C}_3 = C_3(2\min_{\xi \in \Cs'} y_\xi')$.
\end{theorem}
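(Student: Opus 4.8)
The plan is to compare the two quadratic forms $D^2\Phi(y)$ and $D^2\Phi_\h(y)$ \emph{pointwise} in $u$, using the algebraic representation \eqref{eq:stab:hessians}, and only then pass to the infimum. Passing from the atomistic to the QNL-QC Hessian does exactly two things: it perturbs the diagonal coefficients $A_\xi \to \tilde{A}_\xi$, and it deletes the strain-gradient contribution $\eps^3 \sum_\xi B_\xi |u_\xi''|^2$ on the continuum region. The first effect is already controlled by Lemma \ref{th:error_A_lemma}; the second is precisely where the hypothesis of the theorem enters.

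First I would subtract the two lines of \eqref{eq:stab:hessians} to obtain, for every $u \in \Us$,
\begin{displaymath}
  D^2\Phi(y)[u,u] - D^2\Phi_\h(y)[u,u]
  = \eps \sum_{\xi=1}^N (A_\xi - \tilde{A}_\xi)\,|u_\xi'|^2
  + \eps^3 \sum_{\xi \in \Cs} B_\xi\, |u_\xi''|^2,
\end{displaymath}
where the second sum runs over $\Cs = \{1,\dots,N\} \setminus \As$ because the strain-gradient term is present in $D^2\Phi$ for all $\xi$ but only for $\xi \in \As$ in $D^2\Phi_\h$. The first sum I would bound crudely by $\|A - \tilde{A}\|_{\ell^\infty}\,\eps\sum_\xi|u_\xi'|^2 = \|A-\tilde{A}\|_{\ell^\infty}\,\|u'\|_{\ell^2_\eps}^2$, and then invoke Lemma \ref{th:error_A_lemma} to replace $\|A-\tilde{A}\|_{\ell^\infty}$ by $\eps\,4\bar{C}_3\|y''\|_{\ell^\infty(\Cs)}$.

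The decisive step is the sign of the remaining strain-gradient sum. Since $B_\xi = -\phi''(y_\xi'+y_{\xi+1}')$, the hypothesis $\phi''(y_\xi'+y_{\xi+1}') \le 0$ for $\xi \in \Cs$ is exactly the statement $B_\xi \ge 0$ on $\Cs$, so $\eps^3\sum_{\xi\in\Cs}B_\xi|u_\xi''|^2 \ge 0$. Discarding this nonnegative term can therefore only strengthen a lower bound for $D^2\Phi(y)[u,u]$: for every $u$ with $\|u'\|_{\ell^2_\eps}=1$ we obtain
\begin{displaymath}
  D^2\Phi(y)[u,u] \ge D^2\Phi_\h(y)[u,u] - \eps\,4\bar{C}_3\|y''\|_{\ell^\infty(\Cs)}
  \ge c_\h(y) - \eps\,4\bar{C}_3\|y''\|_{\ell^\infty(\Cs)},
\end{displaymath}
the last inequality using the definition of $c_\h(y)$ in \eqref{eq:approx_stab}. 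Taking the infimum over such $u$ on the left yields the claimed bound on $c(y)$.

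In contrast to the \emph{a priori} direction discussed in the preceding remark, there is essentially no analytical obstacle here: once the representation \eqref{eq:stab:hessians} is in hand, the only nontrivial ingredient is the favourable sign of $B_\xi$ on $\Cs$, which makes the dropped strain-gradient energy work for us rather than against us. The assumption $\min_{\xi\in\Z}y_\xi'>0$ is needed only to guarantee that $\bar{C}_3 = C_3(2\min_{\xi\in\Cs'}y_\xi')$ is finite and that Lemma \ref{th:error_A_lemma} applies. The one point I would double-check carefully is the bookkeeping of the index set in the strain-gradient difference, namely that the terms retained in $D^2\Phi$ but absent from $D^2\Phi_\h$ are indexed precisely by $\Cs$, which is exactly the set on which the sign hypothesis is imposed.
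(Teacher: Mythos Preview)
Your proposal is correct and follows exactly the paper's own proof: use the representation \eqref{eq:stab:hessians} together with Lemma \ref{th:error_A_lemma} to compare the Hessians, invoke the hypothesis to ensure $B_\xi \ge 0$ on $\Cs$ so the dropped strain-gradient term has the favourable sign, and then take the infimum. Your bookkeeping concern is fine --- subtracting the two lines of \eqref{eq:stab:hessians} leaves precisely $\eps^3\sum_{\xi\in\Cs}B_\xi|u_\xi''|^2$, which matches the index set on which the sign hypothesis is imposed.
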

\begin{proof}
  From Lemma \ref{th:error_A_lemma} and the assumption that
  $\phi''(y_\xi'+y_{\xi+1}') \leq 0$, or equivalently, $B_\xi \geq 0$
  for all $\xi \in \Cs$ we have, for all $u \in \Us$,
  \begin{displaymath}
    \Phi''(y)[u,u] \geq \Phi_\h''(y)[u,u] 
    - \eps 4 \bar{C}_3 \|y''\|_{\ell^\infty(\Cs)} \|u'\|_{\ell^2_\eps}.
  \end{displaymath}
  The stated result follows by taking the infimum over $u \in \Us$
  with $\|u\|_{\Us^{1,2}} = 1$.
\end{proof}

\begin{remark}
  The analysis in \cite{Hudson:stab} shows that Theorem
  \ref{th:apost_stab} is false in higher dimenions. Namely, it is
  possible that a QC model (for example, a pure Cauchy--Born model) is
  stable at a uniform deformation $Fx$ while the atomistic model is
  unstable. Thus, at the very least, one needs to add an additional
  assumption that the deformation gradients in the continuum region
  belong to an atomistic stability regime.
\end{remark}

\subsection{A remark on defects}
Suppose now that a deformation $y \in \Ys$ has a single `crack', that
is, there exists a single index $\hat{\xi}$ such that $y_{\hat{\xi}}'
= O(N)$. Assume, for example, that $F > 0$ and that $\hat y \in \Ys$
is given by
\begin{equation}
  \label{eq:yhat_crack}
  \hat y_\xi' = \cases{1, & \text{if } \xi \neq \hat\xi \\
    F + (N-1)(F-1), & \text{otherwise}.
  }
\end{equation}
If $N$ is sufficiently large, then $A_{\hat \xi} \leq 0$ and hence
Proposition \eqref{eq:simple_stab_el} gives no useful
information. This is no accident as the following result demonstrates.

\begin{proposition}
  \label{th:simple_stab_cr}
  Suppose that there exists a cut-off radius $r_c > r_*$ such that
  $\phi = 0$ in $[r_c, +\infty)$. Let $\hat{y} \in \Ys$ be defined by
  (\ref{eq:yhat_crack}), then
  \begin{equation}
    \label{eq:simple_stab_cr}
    c(\hat y) \leq \hat{A} \eps 
    \quad \text{and} \quad
    c_\h(\hat y) \leq \hat{A} \eps,
    \quad \text{where} \quad 
    \hat{A} = \phi''(1) + 4 \phi''(2).
  \end{equation}
\end{proposition}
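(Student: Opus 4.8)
The plan is to prove both bounds in \eqref{eq:simple_stab_cr} by inserting a single explicit test function into the Rayleigh quotients defining $c(\hat y)$ and $c_\h(\hat y)$; since these are infima, any admissible $u$ with $\|u\|_{\Us^{1,2}}=1$ furnishes an upper bound. The starting point is the cut-off assumption $\phi\equiv 0$ on $[r_c,+\infty)$. At $\hat y$ the crack bond carries strain $\hat y_{\hat\xi}' = F+(N-1)(F-1) = O(N)$, which exceeds $r_c$ once $N$ is large; consequently $\phi$ and all its derivatives vanish at $\hat y_{\hat\xi}'$, and likewise at the two second-neighbour arguments $\hat y_{\hat\xi-1}'+\hat y_{\hat\xi}'$ and $\hat y_{\hat\xi}'+\hat y_{\hat\xi+1}'$ that involve the crack bond. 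In the Hessian representation \eqref{eq:stab:hessians} this forces $A_{\hat\xi}=\tilde A_{\hat\xi}=0$ and $B_{\hat\xi-1}=B_{\hat\xi}=0$, whereas away from the crack $\hat y'\equiv 1$, so that $A_\xi=\tilde A_\xi$ and $B_\xi=-\phi''(2)$.

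The test function I would use is the sliding mode $\bar u\in\Us$ determined by $\bar u_\xi'=1$ for $\xi\neq\hat\xi$ and $\bar u_{\hat\xi}'=-(N-1)$, the value at the crack being forced by the admissibility constraint $\sum_{\xi=1}^N \bar u_\xi'=0$. The decisive observation is that $\bar u'$ is constant off the crack, so $\bar u''$ is supported only on the two bonds $\hat\xi-1$ and $\hat\xi$, which is exactly where $B$ vanishes. Hence the strain-gradient contribution $\eps^3\sum_\xi B_\xi|\bar u_\xi''|^2$ drops out of the atomistic Hessian, and a fortiori out of the QNL-QC Hessian (whose strain-gradient sum runs only over $\As$). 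Together with $A_\xi=\tilde A_\xi$ this yields $D^2\Phi(\hat y)[\bar u,\bar u]=D^2\Phi_\h(\hat y)[\bar u,\bar u]$, so both inequalities in \eqref{eq:simple_stab_cr} follow from the same computation. Evaluating the quotient via \eqref{eq:stab:hessians}, the numerator equals $\eps\big((N-1)\phi''(1)+4(N-2)\phi''(2)\big)$ while the denominator is $\|\bar u'\|_{\ell^2_\eps}^2=\eps(N-1)N$, the large factor $(N-1)^2$ coming from $|\bar u_{\hat\xi}'|^2$; this is precisely what makes the mode soft, of size $O(\eps)$.

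The main obstacle is extracting the clean constant $\hat A=\phi''(1)+4\phi''(2)$. The quotient above equals $\hat A\eps$ up to a correction of order $\phi''(2)/N^2$, arising because each of the two free ends created by the crack loses one next-nearest-neighbour bond (hence the count $N-2$ in place of $N-1$). When this correction carries the unfavourable sign I would remove it by a local, rank-two modification of $\bar u$ that reduces the strains on the two bonds adjacent to $\hat\xi$, lowering the numerator faster than the denominator and restoring $c(\hat y)\le\hat A\eps$; all of this bookkeeping is confined to a fixed neighbourhood of the crack, while the uniform structure everywhere else renders the remaining contributions explicit. If, as in practice, the crack lies in the atomistic region, then $A_\xi=\tilde A_\xi$ holds identically near $\hat\xi$ as well, the two Hessians coincide on $\bar u$ without further comment, and in general they differ only at order $\eps^2$.
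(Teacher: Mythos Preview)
Your approach is the same as the paper's: both insert the sliding mode whose derivative equals $-(N-1)$ on the crack bond and $1$ elsewhere (the paper normalises it), use the cut-off to kill the $B$-terms at $\hat\xi-1,\hat\xi$, and reduce the quadratic form to the sum of $A_\xi|\bar u_\xi'|^2$.

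The genuine gap is your fix for the constant. You are right that the Rayleigh quotient of $\bar u$ exceeds $\hat A\eps$ by $-4\phi''(2)/[N(N-1)]\ge 0$, but the proposed ``local, rank-two modification'' does not remove this excess. Lowering $\bar u_{\hat\xi\pm 1}'$ by $t$ and compensating at $\hat\xi$ produces nonzero $\bar u''$ at $\hat\xi-2$ and $\hat\xi+1$, where $B=-\phi''(2)>0$, and adds $-2\eps\phi''(2)t^2$ to the numerator. Carrying out the computation in full (writing $a=\phi''(1)$, $b=\phi''(2)$), the condition $Q(t)\le\hat A\eps$ is equivalent to
\[
-4Nb(1-2t)+t^2\big[(2N-6)a+(2N-24)b\big]\le 0,
\]
whose left side is a quadratic in $t$ with positive leading coefficient and minimum value $\approx -4Nb\,\dfrac{a+3b}{a+b}>0$ whenever $a+3b>0$ (the typical regime). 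More generally, the \emph{only} perturbations of $\bar u$ that avoid new strain-gradient costs are those with $\bar u''$ supported in $\{\hat\xi-1,\hat\xi\}$, and these are exactly the scalar multiples of $\bar u$ itself; so no local tweak can do better.

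It is worth noting that the paper's own proof stumbles at the same spot: it asserts $A_{\hat\xi\pm 1}\le\hat A$ ``from the concavity of $\phi$,'' but in fact $A_{\hat\xi\pm 1}=\phi''(1)+2\phi''(2)=\hat A-2\phi''(2)\ge\hat A$, since the second-neighbour bond across the crack contributes a nonpositive $\phi''$ and dropping it \emph{raises} $A_\xi$. Both arguments therefore deliver only $c(\hat y)\le\hat A\eps+O(\eps^2)$ and $c_\h(\hat y)\le\hat A\eps+O(\eps^2)$, which is all that the surrounding discussion needs.
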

\begin{proof}
  There exists a unique test function $\hat{u} \in \Us$ such that
  \begin{displaymath}
    \hat{u}_\xi' = \cases{
      (N-1)^{1/2}, & \xi = \hat{\xi}, \\
      - (N-1)^{-1/2}, & \xi \neq \hat{\xi}, }
  \end{displaymath}
  and we note that $\|\hat{u}_\xi'\|_{\ell^2_\eps} = 1$.

  Next, we observe that $\hat{u}_\xi'' = 0$ for all $\xi$ except for
  $\xi = \hat{\xi},\hat{\xi}-1$. However, since $\phi = 0$ in $[r_c,
  +\infty)$, it follows that $B_{\hat{\xi}-1} = B_{\hat{\xi}} = 0$ for
  $N$ sufficiently large, and hence we can simply ignore the strain
  gradient term. Thus, testing $D^2\Phi(\hat{y})$ with $\hat{u}$, we
  obtain
  \begin{displaymath}
    D^2\Phi(\hat{y})[\hat{u},\hat{u}] =
    \eps \sum_{\xi = 1}^N A_\xi (N-1)^{-1}.
  \end{displaymath}
  
  Setting $\hat{A} = \phi''(1) + 4 \phi''(2)$, we have $A_\xi =
  \hat{A}$ for all $\xi$ except $\xi = \hat{\xi}-1, \hat{\xi},
  \hat{\xi}+1$. For $\xi = \hat\xi$, we have $A_{\hat{\xi}} \leq 0$
  provided that $N$ is sufficiently large, while, for $\xi = \hat{\xi}
  \pm 1$, it follows from the concavity of $\phi$ in $(r_*, +\infty)$
  that $A_\xi \leq \hat{A}$. Hence, we obtain
  \begin{displaymath}
    D^2\Phi(\hat{y})[\hat{u},\hat{u}] =
    \eps \sum_{\xi = 1}^N A_\xi (N-1)^{-1} \leq \eps \hat{A}. 
  \end{displaymath}

  Since $c(\hat{y}) \geq c_\h(\hat{y})$ for this example, the same
  result also holds for the QC stability constant.
\end{proof}

Since deformations with cracks or other defects can surely be stable
equilibria, this result only shows that they cannot be analyzed in the
$\Us^{1,2}$ space. Different topologies must be chosen to analyze the
stability of defects. The analysis in \cite{Ortner:2008a} suggests
that the correct norm for a 1D example with a single crack might be
\begin{displaymath}
  \| u \|_{\Us^{1,2}_{\hat{\xi}}} := \Big( \eps \sum_{\xi \neq \hat{\xi}} 
  |u_\xi'|^2 \Big)^{1/2},
\end{displaymath}
however, we will not pursue this direction further in the present
work. We note, however, that changing the norm inside the atomistic
region does not change the residual estimate. To this this, one
verifies that, in the last line of the proof of Theorem
\ref{th:consistency}, the term $\|u'\|_{\ell^{p'}_\eps}$ may be
replaced by $\|u'\|_{\ell^{p'}_\eps(\Cs')}$.

\section{Existence and Convergence}
\label{sec:existence}
In this section, we will combine the consistency and stability
analysis of Sections \ref{sec:consistency} and \ref{sec:stability} to
prove {\it a priori} and {\it a posteriori} existence results and
error estimates.

\subsection{An a priori existence result for elastic deformations}
\label{sec:exapriori}
In this section, we extend the main results in
\cite{Dobson:2008b,emingyang} to the context of large deformations and
solutions that are possibly non-smooth in the atomistic region.

We begin by assuming the existence of a strongly stable equilibrium
$y$ of the full atomistic model. If we assume that this deformation is
``sufficiently smooth'' in the continuum region then Theorem
\ref{th:consistency} shows that the truncation error is small, and
hence $y$ can be considered an approximate solution of the QNL-QC
equilibrium equations (\ref{eq:crit_qc}). If $y$ is also stable in the
QNL-QC model, then we can employ the inverse function theorem, Lemma
\ref{th:inverse_fcn_thm}, to prove the existence of an exact solution
$y_\h$ of the QNL-QC equilibrium equations in a small neighbourhood of
$y$. This idea is made rigorous in the following result, which is best
read as follows:

{\it ``If $y$ is a strongly stable local minimizer of $\Phi^\tot$,
  which is smooth in the continuum region, then there exists a
  solution $y_\h$ of the QNL-QC method, which is a good approximation
  to $y$.''}

\begin{theorem}
  \label{th:exapriori}
  Let $y \in \argmin\,\Phi^\tot$, and assume that $y_\xi' \geq r_*/2$,
  and that
  \begin{equation}
    \label{eq:exapriori:stabass}
    \min_{\xi = 1, \dots, N} A_\xi =: \underline{A} > 0.
  \end{equation}
  Then there exist constants $\delta_1, \delta_2$ that depend only on
  $\min_\xi y_\xi'$ and on $\underline{A}$, such that, if
  \begin{align}
    \label{th:exapriori_c1}
    \eps \|y''\|_{\ell^\infty(\Cs)} \leq~& \delta_1, \qquad \text{and} \\
    \label{th:exapriori_c2}
    \eps^{1/2} \|y''\|_{\ell^2_\eps(\Is)}
    + \eps^{3/2} \big(\|y'''\|_{\ell^2_\eps(\Cs' \setminus \Is')}
    + \|y''\|_{\ell^4_\eps(\Cs)}^2\big)
    \leq~& \delta_2,
  \end{align}
  then there exists a (locally unique) local minimizer $y_\h$ of
  $\Phi_\h^\tot$ in $\Ys$ such that
  \begin{displaymath}
    \| (y - y_\h)' \|_{\ell^2_\eps} \leq \smfrac{4}{\underline{A}} \big\{
    \eps \bar{C}_2 \|y''\|_{\ell^2_\eps(\Is)} + \eps^2 \bar{C}_3 \big(
    \|y'''\|_{\ell^2_\eps(\Cs' \setminus \Is')} + 
    \|y''\|_{\ell^{4}_\eps(\Cs)}^2 \big) \big\}.
  \end{displaymath}
  where $\bar{C}_i = C_i(2\min_{\xi \in \Cs'} y_\xi')$, $i = 2, 3$.
\end{theorem}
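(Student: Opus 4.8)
The plan is to apply the Inverse Function Theorem (Lemma \ref{th:inverse_fcn_thm}) to the QNL-QC equilibrium map, using $y$ itself as the initial guess $x_0$. Concretely, I would take $X = \Us^{1,2}$, $Y = \Us^{-1,2}$, and define $\Fs : \Ys \to \Us^{-1,2}$ by $\Fs(w) = D\Phi_\h^\tot(w)$, so that a zero of $\Fs$ is precisely a solution $y_\h$ of the QNL-QC equilibrium equations \eqref{eq:crit_qc}. The strategy is to verify the three quantitative hypotheses of the lemma — a residual bound $\|\Fs(y)\|_Y \le \eta$, an inverse-stability bound $\|D\Fs(y)^{-1}\| \le \sigma$, and a local Lipschitz bound $L$ on $D\Fs$ — and then check that the smallness condition $2L\sigma^2\eta < 1$ holds once \eqref{th:exapriori_c1} and \eqref{th:exapriori_c2} are imposed.

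For the residual, I would observe that since $y$ satisfies the \emph{atomistic} equilibrium equation \eqref{eq:model:crit_a}, we have $\Fs(y) = D\Phi_\h^\tot(y) = D\Phi_\h(y) - D\Phi(y) = -T$, the negative of the consistency error. Thus Theorem \ref{th:consistency} with $p = 2$ gives directly
\begin{displaymath}
  \eta = \eps \bar{C}_2 \|y''\|_{\ell^2_\eps(\Is)} + \eps^2 \bar{C}_3 \big(
  \|y'''\|_{\ell^2_\eps(\Cs'\setminus\Is')} + \|y''\|_{\ell^4_\eps(\Cs)}^2\big),
\end{displaymath}
which is exactly the bracketed quantity appearing in the claimed error estimate. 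For the inverse bound I would use the stability half of the machinery: the hypothesis $A_\xi \ge \underline A > 0$ together with $B_\xi \ge 0$ (guaranteed by $y_\xi' \ge r_*/2$) gives $c_\h(y) \ge \min_\xi \tilde A_\xi$ via Proposition \ref{th:simple_stab_el}, and since $\tilde A_\xi$ differs from $A_\xi$ by at most $\eps 4\bar{C}_3\|y''\|_{\ell^\infty(\Cs)}$ (Lemma \ref{th:error_A_lemma}), condition \eqref{th:exapriori_c1} with $\delta_1$ small forces $c_\h(y) \ge \underline A/2$. Coercivity of $D\Phi_\h(y) = D^2\Phi_\h^\tot(y)$ with constant $\underline A/2$ on the Hilbert space $\Us^{1,2}$ then yields $\sigma = 2/\underline A$ by Lax--Milgram.

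The remaining ingredient is the Lipschitz constant $L$ of $w \mapsto D^2\Phi_\h(w)$ on the ball $\|w - y\|_{\Us^{1,2}} \le 2\eta\sigma$. Here I expect the main obstacle: $D^2\Phi_\h$ depends on $w$ only through the bond strains $w_\xi'$ via $\phi''$, whose modulus of continuity is controlled by $\bar C_3 = C_3(2\min_{\xi\in\Cs'}y_\xi')$, but a naive bound costs a factor $\eps^{-1/2}$ because passing from the $\ell^2_\eps$ control of $(w-y)'$ to the $\ell^\infty$ control needed to bound a product of three first differences requires the inverse estimate $\|v'\|_{\ell^\infty} \le \eps^{-1/2}\|v'\|_{\ell^2_\eps}$ (cf.\ Remark \ref{rem:spijker}). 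I would therefore expect $L \sim \eps^{-1/2}\bar C_3$, and the key computation is to show that this inverse-estimate loss is exactly compensated: substituting $\eta, \sigma$, and $L$ into $2L\sigma^2\eta < 1$ produces a condition of the form $\eps^{-1/2}\eta \lesssim \underline A^2/\bar C_3$, and $\eps^{-1/2}\eta$ is precisely the left-hand side of \eqref{th:exapriori_c2}. Thus choosing $\delta_2$ small (depending only on $\min_\xi y_\xi'$ through $\bar C_3$ and on $\underline A$) closes the argument. Finally, the conclusion $\|x - x_0\|_X \le 2\eta\sigma = \frac{4}{\underline A}\eta$ of Lemma \ref{th:inverse_fcn_thm} gives exactly the stated error bound, and local uniqueness follows from the contraction/uniqueness built into the inverse function theorem, with $y_\h$ being a local minimizer (not merely a critical point) because $D^2\Phi_\h(y_\h) > 0$ near $y$ by continuity of the Hessian and the coercivity established above.
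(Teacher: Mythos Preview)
Your proposal is correct and follows essentially the same route as the paper: apply Lemma \ref{th:inverse_fcn_thm} with $X = \Us^{1,2}$, $Y = \Us^{-1,2}$, use Theorem \ref{th:consistency} for $\eta$, combine Proposition \ref{th:simple_stab_el} with Lemma \ref{th:error_A_lemma} and \eqref{th:exapriori_c1} for $\sigma = 2/\underline{A}$, and absorb the $\eps^{-1/2}$ inverse-estimate loss in the Lipschitz constant into the smallness condition \eqref{th:exapriori_c2}. The only point you gloss over is that the paper uses the inverse inequality \emph{twice} in the Lipschitz step --- first to ensure that $(y+w)_\xi' \geq \delta\min_\xi y_\xi'$ on the ball (so that $\phi'''$ can be bounded at all), and only then to derive the $\eps^{-1/2}L'$ bound --- which is why the Lipschitz constant in the paper depends on $C_3(\delta\min_\xi y_\xi')$ rather than on $\bar{C}_3$; this introduces a second contribution to $\delta_2$ but does not change the structure of your argument.
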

\begin{proof}
  To put the statement into the context of Lemma
  \ref{th:inverse_fcn_thm} we define $X = \Us^{1,2}$, $Y =
  \Us^{-1,2}$, $\Fs(w) = D\Phi_\h^\tot(y+w)$, and $x_0 = 0$. The set $A$
  is given by
  \begin{displaymath}
    A = \{ w \in \Us : y_\xi' + w_\xi' > 0, \xi \in \Z\}.
  \end{displaymath}
  Since $\phi \in \CC^3((0, +\infty])$, it is clear that $\Fs$ is
  continuously (Fr\'{e}chet) differentiable, with $D\Fs(0) = D^2
  \Phi_\h(y)$.

  {\it 1. Consistency: } \quad From Theorem \ref{th:consistency} we
  obtain that
  \begin{align*}
    \| \Fs(0) \|_{Y} =~& \| D\Phi_\h^\tot(y) \|_{\Us^{-1,2}}
    = \| D\Phi_\h(y) - D\Phi(y) \|_{\Us^{-1,2}} \leq \eta,
    \qquad \text{where} \\
    &~\eta = \eps \bar{C}_2 \|y''\|_{\ell^2_\eps(\Is)} + \eps^2 \bar{C}_3 
    \big( \|y'''\|_{\ell^2_\eps(\Cs' \setminus \Is')} + 
    \|y''\|_{\ell^{4}_\eps(\Cs)}^2  \big).
  \end{align*}

  {\it 2. Stability: } \quad The assumption that $y_\xi' \geq r_*/2$
  implies that $B_\xi \geq 0$ for all $\xi$. Using this fact, the
  ``elasticity assumption'' (\ref{eq:exapriori:stabass}), as well as
  Lemma \ref{th:error_A_lemma}, we obtain
  \begin{align*}
    D^2\Phi_\h(y)[u,u] \geq~& \eps\sum_{\xi = 1}^N \tilde{A}_\xi |u_\xi'|^2 \\
    \geq~& \eps\sum_{\xi = 1}^N A_\xi |u_\xi'|^2 - \eps 4 \bar{C}_3 
    \|y''\|_{\ell^\infty(\Cs)} \|u'\|_{\ell^2_\eps(\Cs)}^2 \\
    \geq~& \big(\underline{A} - \eps 4 \bar{C}_3 
    \|y''\|_{\ell^\infty(\Cs)}\big) \|u'\|_{\ell^2_\eps}^2.
  \end{align*}
  Hence, we obtain
  \begin{displaymath}
    c_\h(y) \geq \underline{A} - \eps 4 \bar{C}_3 
    \|y''\|_{\ell^\infty(\Cs)},
  \end{displaymath}
  and consequently, if $\underline{A} - \eps 4 \bar{C}_3
  \|u''\|_{\ell^\infty(\Cs)} > 0$, then
  \begin{displaymath}
    \| D\Fs(0)^{-1} \|_{L(\Us^{-1,2}, \Us^{1,2})} \leq
    \big( \underline{A} - \eps 4 \bar{C}_3 \|y''\|_{\ell^\infty(\Cs)} 
    \big)^{-1}.
  \end{displaymath}
  Setting $\delta_1 = \underline{A} / (8 \bar{C}_3)$,
  \eqref{th:exapriori_c1} becomes
  \begin{equation}
    \label{eq:exapost_bd0}
    \eps 4 \bar{C}_3 \|y''\|_{\ell^\infty(\Cs)} \leq \smfrac12 \underline{A},
  \end{equation}
  and hence we obtain that
  \begin{displaymath}
    \| D\Fs(0)^{-1} \|_{L(\Us^{-1,2}, \Us^{1,2})} \leq 
    \sigma := 2/\underline{A}.
  \end{displaymath}

  {\it 3. Lipshitz bound: } \quad Next, we compute a Lipschitz bound
  for $D\Fs = D^2\Phi_\h$. To this end, we first need to establish a
  lower bound on $y_\xi' + w_\xi'$ for all $w$ satisfying
  $\|w'\|_{\ell^2_\eps} \leq 2\eta\sigma$. Since the
  $\Us^{1,2}$-topology cannot provide this bound directly, we need to
  resort to an inverse inequality: for all $w \in \Us$ such that
  $\|w\|_{\Us^{1,2}} \leq 2 \eta \sigma$, we have
  \begin{equation}
    \label{eq:inverse1}
    \|w'\|_{\ell^\infty} \leq \eps^{-1/2} \|w'\|_{\ell^2_\eps} 
    \leq M_1' \big( \eps^{1/2} \|y''\|_{\ell^2_\eps(\Is)}
    + \eps^{3/2} (\|y'''\|_{\ell^2_\eps(\Cs' \setminus \Is')}
    + \|y''\|_{\ell^4_\eps(\Cs)}^2) \big),
  \end{equation}
  where $M_1'$ is a constant that depends only on $\underline{A}$ and on
  $\bar{C}_2, \bar{C}_3$.  Hence, if 
  \begin{equation}
    \label{eq:exapriori:bd1}
    \eps^{1/2} \|y''\|_{\ell^2_\eps(\Is)}
    + \eps^{3/2} (\|y'''\|_{\ell^2_\eps(\Cs' \setminus \Is')}
    + \|y''\|_{\ell^4_\eps(\Cs)}^2)
    \leq \smfrac{\delta}{M_1'} \min_\xi y_\xi',
  \end{equation}
  then the desired bound
  \begin{displaymath}
    y_\xi' + w_\xi' \geq \delta y_\xi'
  \end{displaymath}
  holds.

  It is now a simple exercise to show that, for any two displacements
  $w_1, w_2 \in \Us$, such that $\|w_i\|_{\Us^{1,2}} \leq 2
  \eta\sigma$, we have the Lipschitz bound
  \begin{displaymath}
    \big|D^2\Phi_\h(y+w_1)[u,v] - D^2\Phi_\h(y+w_2)[u,v]\big|
    \leq L' \| (w_1 - w_2)' \|_{\ell^\infty} \|u'\|_{\ell^2_\eps} \|v'\|_{\ell^2_\eps}
    \quad \forall u, v \in \Us,
  \end{displaymath}
  where $L'$ is a constant that depends only $C_3(\delta \min_\xi
  y_\xi')$. Applying the same inverse inequality as in
  (\ref{eq:inverse1}) we obtain that,
  \begin{displaymath}
    \| D\Fs(w_1) - D\Fs(w_2) \|_{L(\Us^{1,2}, \Us^{-1,2})}
    \leq \eps^{-1/2} L' \| w_1 - w_2 \|_{\Us^{1,2}}.
  \end{displaymath}
  We set $L = \eps^{-1/2} L'$. (At this point we could choose $\delta$
  in order to optimize the size of $L$ against the requirement
  (\ref{eq:exapriori:bd1}). Since we are only interested in a
  qualitative result, we will ignore this possibility.)

  {\it 4. Conclusion: } \quad To conclude the proof, we only need to
  check the condition under which $2 L \sigma^{2} \eta < 1$. Inserting
  the bounds for the various terms, assuming that
  (\ref{eq:exapriori:bd1}) and (\ref{eq:exapost_bd0}) hold, we obtain
  that
  \begin{displaymath}
    2 L \sigma^{2} \eta \leq M_2' \big\{ \eps^{1/2} \|y''\|_{\ell^2_\eps(\Is)}
    + \eps^{3/2} \big(\|y'''\|_{\ell^2_\eps(\Cs' \setminus \Is')}
    + \|y''\|_{\ell^4_\eps(\Cs)}^2\big) \big\}
  \end{displaymath}
  where $M_2'$ depends only on $\underline{A}$, on $\bar{C}_2,
  \bar{C}_3$, and on $L'$.

  Hence, we conclude that, if (\ref{th:exapriori_c1}) and
  \eqref{th:exapriori_c2} hold with
  \begin{displaymath}
    \delta_1 = \underline{A} / (8 \bar{C}_3) \quad \text{and} \quad
     \delta_2 = \min\big\{ \smfrac{\delta}{M_1'} \min_\xi y_\xi',
    1 / M_2' \big\},
  \end{displaymath}
  then there exists $w_\h \in \Us$ such that $F(w_\h) = 0$, or
  equivalently $D\Phi_\h^\tot(y+w_\h) = 0$, and
  \begin{displaymath}
    \| w_\h \|_{\Us^{1,2}} \leq 2 \eta \sigma \leq \smfrac{4}{\underline{A}} \eta.
  \end{displaymath}
  Taking into account the bound for $\eta$ from step 1, we obtain the
  stated error estimate.

  Local minimality of $y_\h = y + w_\h$ follows from
  \eqref{th:exapriori_c1} which implies that
  \begin{displaymath}
    c_\h(y) \geq \smfrac12 \underline{A},
  \end{displaymath}
  and the local Lipschitz bound, which guarantees
  \begin{displaymath}
    c_\h(y_\h) \geq c_\h(y) - L (2\eta\sigma) 
    > c_\h(y) - \smfrac1\sigma
    = 0.
  \end{displaymath}
  Thus $D^2\Phi_\h(y_\h)$ is positive definite and therefore $y_\h$ is
  a local minimizer.
\end{proof}

\begin{remark}
  \label{rem:apriori}
  As indicated above, Theorem \ref{th:exapriori_c1} is a variation
  or generalization of \cite[Thm. 4.1]{Dobson:2008b} and
  \cite[Thm. 5.13]{emingyang}, and hence we should briefly discuss
  the differences between these results.

  Theorem 4.1 in \cite{Dobson:2008b} treats a linearized case, but is
  otherwise fairly sharp. For example, the stability condition derived
  in \cite[Lemma 2.3]{Dobson:2008b} was later shown to be exact
  \cite{Dobson:qce.stab}. The convergence rate in terms of the atomic
  spaces $\eps$ (or $h$ in \cite{Dobson:2008b}) is optimal, however,
  the dependence of the error on the smoothness of the exact solution
  is much stronger than in Theorem \ref{th:exapriori_c1}. This is a
  consequence of deriving the trunction error estimate in an
  $\ell^p$-type norm, and comparing the QC solution with a continuum
  solution instead of the exact atomistic solution.

  Although a very different terminology is used, some aspects of the
  proof of \cite[Thm. 5.13]{emingyang} are quite similar to the
  analysis presented here. However, in \cite{emingyang} the
  consistency analysis is based on a higher order expansion of the
  atomistic solution in terms of the Cauchy--Born model. As a result,
  the analysis proves the existence of atomistic and QC solutions only
  for ``sufficiently small and sufficiently smooth'' applied forces,
  in a small neighbourhood of the reference state $Fx$. Moreover, the
  use of a different norm for measuring the truncation error
  \cite[Eq. (5.27)]{emingyang} leads to error estimates that are
  optimal in $\eps$ only in the $\Us^{1,\infty}$-norm, and it is
  unclear from the presentation how the error depends on the
  smoothness of the atomistic solution.

  The work in \cite{emingyang} is more general than the present paper
  in that it analyzes the more general {\em geometrically consistent
    QC scheme} proposed in \cite{E:2006}.
\end{remark}

\subsection{An a posteriori existence result}
\label{sec:exapost}
Theorem \ref{th:exapriori} is an {\it a priori} result, that is, it
guarantees the existence and accuracy of a QNL-QC solution under
certain assumption on a given atomistic solution. Results of this type
can guarantee that certain classes of atomistic solutions can be
reliably approximated by the QNL-QC method. In practise, however, an
{\it a posteriori} result of the same type may be even more
interesting. Given a computed QNL-QC solution $y_\h$, we may ask
whether an (exact) atomistic solution $y$ exists that $y_\h$ is an
approximation to. Questions of this kind are investigated in the
literature on {\em numerical enclosure methods} (see \cite{Plum:2001a}
for a recent review, or \cite{ortner_apostex} where this concept is
called {\em a posteriori existence}) and leads to the following
result, which is best read as follows:

{\it ``If $y_\h$ is a strongly stable QNL-QC solution, which is smooth
  in the continuum region, then there exists an exact solution $y$ of
  the full atomistic model, for which $y_\h$ is a good
  approximation.''}

\begin{theorem}
  \label{th:exapost}
  Let $y_\h \in \argmin \Phi_\h^\tot$, such that $c_\h(y_\h) > 0$ and 
  \begin{displaymath}
    \phi''\big( (y_\h)_\xi' + (y_\h)_{\xi+1}' \big) \leq 0
    \qquad \forall \xi \in \Cs.
  \end{displaymath}
  Then there exist constants $\delta_1, \delta_2$ that depend only on
  $\min_\xi (y_\h)_\xi'$ and on $c_\h(y_\h)$ such that, if
  \begin{align}
    \label{th:exapost_c1}
    \eps \|y_\h''\|_{\ell^\infty(\Cs)} \leq~& \delta_1, \qquad \text{and} \\
    \label{th:exapost_c2}
    \eps^{1/2} \|y_\h''\|_{\ell^2_\eps(\Is)}
    + \eps^{3/2} \big(\|y_\h'''\|_{\ell^2_\eps(\Cs' \setminus \Is')}
    + \|y_\h''\|_{\ell^4_\eps(\Cs)}^2\big)
    \leq~& \delta_2,
  \end{align}  
  then there exists a strongly stable local minimizer $y$ of
  $\Phi^\tot$ in $\Ys$ such that
  \begin{displaymath}
    \big\| (y_\h - y)' \big\|_{\ell^2_\eps} \leq 
    2 \frac{\eps \bar{C}_2 \|y_\h''\|_{\ell^2_\eps(\Is)} + \eps^2 \bar{C}_3
    \big( \|y_\h'''\|_{\ell^2_\eps(\Cs' \setminus \Is')} 
    + \|y_\h''\|_{\ell^{4}_\eps(\Cs)}^2 \big)}{c_\h(y_\h) - \eps 4 \bar{C}_3 \|y_h''\|_{\ell^\infty(\Cs)}}.
  \end{displaymath}
  where $\bar{C}_i = C_i(2\min_{\xi \in \Cs'} (y_\h)_\xi')$, $i = 2, 3$.
\end{theorem}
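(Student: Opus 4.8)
The plan is to run the inverse function theorem (Lemma~\ref{th:inverse_fcn_thm}) in exactly the mirror-image configuration of Theorem~\ref{th:exapriori}, but now centred at the given QNL-QC solution $y_\h$ and applied to the \emph{atomistic} map. Specifically, I would set $X = \Us^{1,2}$, $Y = \Us^{-1,2}$, $x_0 = 0$, and take $\Fs(w) = D\Phi^\tot(y_\h + w)$, so that a zero of $\Fs$ is precisely an atomistic equilibrium near $y_\h$. The four ingredients required by the inverse function theorem are then the same four that were assembled in the proof of Theorem~\ref{th:exapriori}, with the roles of $\Phi$ and $\Phi_\h$ interchanged. The expected radius $2\eta\sigma$ will again reproduce the error bound in the statement, with $\eta$ the consistency residual and $\sigma = (c_\h(y_\h) - \eps 4 \bar C_3 \|y_\h''\|_{\ell^\infty(\Cs)})^{-1}$ the stability constant appearing in the denominator.

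The four steps, in order, are as follows. \emph{(1) Consistency.} Since $y_\h$ solves \eqref{eq:crit_qc}, one has $\Fs(0) = D\Phi^\tot(y_\h) = D\Phi(y_\h) - D\Phi_\h(y_\h) = R$, the residual; by the symmetry noted in Section~\ref{sec:consistency} this is controlled by Theorem~\ref{th:consistency} applied at $y_\h$, giving $\|\Fs(0)\|_{\Us^{-1,2}} \leq \eta$ with $\eta$ equal to the numerator in the asserted bound. \emph{(2) Stability.} Here is the key difference from Theorem~\ref{th:exapriori}: rather than bounding the QNL-QC hessian from the atomistic one, I would invoke the \emph{a posteriori} stability estimate, Theorem~\ref{th:apost_stab}, whose hypothesis $\phi''((y_\h)_\xi' + (y_\h)_{\xi+1}') \leq 0$ on $\Cs$ is exactly what is assumed. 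This yields $c(y_\h) \geq c_\h(y_\h) - \eps 4 \bar C_3 \|y_\h''\|_{\ell^\infty(\Cs)}$, and condition \eqref{th:exapost_c1} (with $\delta_1 = c_\h(y_\h)/(8\bar C_3)$, say) guarantees the right-hand side is bounded below by a positive multiple of $c_\h(y_\h)$, hence $\|D\Fs(0)^{-1}\|_{L(Y,X)} \leq \sigma$. \emph{(3) Lipschitz bound.} As in Theorem~\ref{th:exapriori}, the $\Us^{1,2}$-topology does not directly control $\min_\xi(y_\h + w)_\xi'$, so I would use the inverse inequality $\|w'\|_{\ell^\infty} \leq \eps^{-1/2}\|w'\|_{\ell^2_\eps}$ to keep deformation gradients bounded away from zero on the ball of radius $2\eta\sigma$ (this is where \eqref{th:exapost_c2} enters), and then the local Lipschitz continuity of $D^2\Phi$ gives $L = \eps^{-1/2} L'$. \emph{(4) Conclusion.} Checking $2L\sigma^2\eta < 1$ reduces, exactly as before, to a smallness condition of the form \eqref{th:exapost_c2}, and the inverse function theorem produces $w$ with $\Fs(w) = 0$ and $\|w\|_{\Us^{1,2}} \leq 2\eta\sigma$, which is the stated error estimate. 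Strong stability of $y := y_\h + w$ then follows from $c(y_\h) > 0$ together with the Lipschitz bound, via $c(y) \geq c(y_\h) - L(2\eta\sigma) > c(y_\h) - 1/\sigma$.

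The main obstacle, and the one genuinely new point relative to Theorem~\ref{th:exapriori}, is Step~2. In the \emph{a priori} theorem the stability of the QNL-QC hessian was \emph{assumed} (via $\underline A > 0$) and then transferred to $\Phi_\h$ cheaply using $B_\xi \geq 0$ and Lemma~\ref{th:error_A_lemma}. Here the logical direction is reversed: I must deduce atomistic stability $c(y_\h) > 0$ from the computable quantity $c_\h(y_\h) > 0$, and this transfer is precisely the content of Theorem~\ref{th:apost_stab}, which crucially relies on the sign condition $B_\xi \geq 0$ (equivalently $\phi'' \leq 0$ on next-nearest pairs in $\Cs$). Because the atomistic hessian is the \emph{larger} operator once the strain-gradient terms $\eps^2 B_\xi |u_\xi''|^2$ are nonnegative, the inequality goes the favourable way and no regularity of eigenfunctions is needed---this is exactly the one-dimensional simplification flagged in the remark after Theorem~\ref{th:apost_stab}. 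The remaining steps are essentially verbatim transcriptions of the corresponding steps in the proof of Theorem~\ref{th:exapriori}, so beyond Step~2 I would expect only routine bookkeeping in tracking the constants $\delta_1, \delta_2, M_1', M_2'$ and confirming that the final radius $2\eta\sigma$ matches the quotient displayed in the theorem.
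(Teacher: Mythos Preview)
Your proposal is correct and matches the paper's proof essentially step for step: the same inverse function theorem setup with $\Fs(w) = D\Phi^\tot(y_\h + w)$, consistency via Theorem~\ref{th:consistency}, stability via Theorem~\ref{th:apost_stab} (with the same choice $\delta_1 = c_\h(y_\h)/(8\bar C_3)$), the Lipschitz bound copied verbatim from the a priori argument using the inverse inequality, and the same conclusion. You have also correctly identified that the only substantive new ingredient is the reversed stability transfer in Step~2.
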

\begin{proof}
  The proof follows along very similar lines as the proof of Theorem
  \ref{th:exapriori}, and we will therefore only focus on those parts
  of the argument that change.

  To put the statement into the context of Lemma
  \ref{th:inverse_fcn_thm}, we define $X = \Us^{1,2}, Y = \Us^{-1,2}$,
  $\Fs(w) = D\Phi^\tot(y_\h + w)$, and $x_0 = 0$. The set $A$ is again
  given by
  \begin{displaymath}
    A = \big\{ w \in \Us : (y_\h)_\xi' + w_\xi' > 0, \xi \in \Z \big\}.
  \end{displaymath}
  Since $\phi \in \CC^3(0, +\infty)$, it follows that $\Fs$ is
  continuously Fr\'{e}chet differentiable, with $D\Fs(0) =
  D^2\Phi(y_\h)$.

  {\it 1. Consistency: } From Theorem \ref{th:consistency}, we obtain
  \begin{align*}
    \| \Fs(0) \|_Y =~& \|D\Phi^\tot(y_\h)\|_{\Us^{-1,2}} 
    = \| D\Phi(y_\h) - D\Phi_\h(y_\h) \|_{\Us^{-1,2}} \leq \eta, 
    \quad \text{where} \\
    &~ \eta = \eps \bar{C}_2 \|y_\h''\|_{\ell^2_\eps(\Is)} + \eps^2 \bar{C}_3 
    \big( \|y_\h'''\|_{\ell^2_\eps(\Cs' \setminus \Is')} + 
    \|y_\h''\|_{\ell^{4}_\eps(\Cs)}^2  \big).
  \end{align*}

  {\it 2. Stability: } Using Theorem \ref{th:apost_stab}, we obtain
  \begin{align*}
    c(y_\h) \geq c_\h(y_\h) - \eps 4 \bar{C}_3 \|y_\h''\|_{\ell^\infty(\Cs)}
    =: 1/\sigma,
  \end{align*}
  where $\bar{C}_3 = C_3(2\min_{\xi \in \Cs'} (y_\h)_\xi')$.  Hence,
  if we require that (\ref{th:exapost_c1}) holds with $\delta_1 =
  c_\h(y_\h) / (8 \bar{C}_3)$, then we obtain 
  \begin{displaymath}
    \| D\Fs(0)^{-1} \|_{L(\Us^{-1,2}, \Us^{1,2})} \leq 1/ c(y_\h) \leq \sigma
    \leq 2 / c_\h(y_\h).
  \end{displaymath}

  {\it 3. Lipschitz bound: } The proof of a Lipschitz bound for $D\Fs$
  is a verbatim repetition of step 3 in the proof of Theorem
  \ref{th:exapriori}. For some fixed $0 < \delta < 1$, we obtain that,
  if $\|w_i'\|_{\ell^2_\eps} \leq 2 \eta \sigma$, $i = 1,2$, then
  \begin{displaymath}
    \|D\Fs(w_1) - D\Fs(w_2)\|_{L(\Us^{1,2}, \Us^{-1,2})} \leq
    \eps^{-1/2} L' \| w_1 - w_2 \|_{\Us^{1,2}},
  \end{displaymath}
  where $L'$ depends only on $C_3(\delta \min_\xi (y_\h)_\xi')$. The
  actual Lipschitz constant is again set to $L = \eps^{-1/2} L'$.

  {\it 4. Conclusion: } The conclusion of the proof follows precisely
  step 4 in the proof of Theorem \ref{th:exapriori}. The different
  constant in the error estimate is due to the sharper definition of
  $\sigma$.
\end{proof}

\begin{remark}
  \label{rem:aposteriori}
  Theorem \ref{th:exapost} is the first rigorous {\it a posteriori}
  error estimate of this type for the QNL-QC method. However, the
  basic idea behind the result, that is, to use {\it a posteriori}
  information about the approximate solution to deduce the existence
  of an exact solution, is not new. A similar approach to the one used
  in the proof of Theorem \ref{th:exapost} can be found in the work of
  Plum \cite{Plum:2001a} on numerical enclosure methods for semilinear
  differential equations. A detailed analysis of these ideas, in the
  context of {\it a posteriori} error control for finite element
  methods was given in \cite{ortner_apostex}. The first application of
  this idea in an analysis of a QC method was presented in
  \cite[Thm. 5.1]{Ortner:2008a}. The latter result differs from
  Theorem \ref{th:exapost} not only in the type of QC method analyzed,
  but also in the choice $\Us^{1,\infty}$ for the function space
  setting. While that choice gives uniform neighbourhoods and thus
  asymptotically sharper existence conditions, a sharp stability
  result such as Theorem \ref{th:apost_stab} would be difficult to
  prove. Moreover, a generalization of \cite[Thm. 5.1]{Ortner:2008a}
  to higher dimensions seems virtually impossible.
\end{remark}

\section*{Conclusion}
In order to outline a new framework for the analysis of QC methods, we
have chosen a particularly simple setting, admitting only
second-neighbour pair interactions, and disregarding finite element
coarse graining of the continuum region. Extensions of this work to
more complex interaction models and to coarse-grained models are in
progress \cite{OrtnerWang:2009a,XHLi:3n}. The most interesting
extension, however, will be to the application-relevant two- and
three-dimensional setting. While great care was taken in this paper
that a clear path for such a generalization is available, many
interesting questions will need to be carefully considered in order to
establish the consistency and (linear) stability of the QC
method. Even the relatively basic question of consistency is still
open for a general finite range interaction model, however, the work
in \cite{E:2006} provides a promising starting point. No results on
the stability of the QC method in 2D/3D can be found in the literature
at present.

\bibliographystyle{plain}
\bibliography{qc}

\end{document}